\documentclass[a4paper,fleqn]{cas-dc}

\usepackage[authoryear]{natbib}

\ExplSyntaxOn

\ExplSyntaxOff

\usepackage{cleveref}
\usepackage{graphicx}
\usepackage{booktabs}
\usepackage{multirow}
\usepackage{array}
\usepackage{stfloats} 
\usepackage{cuted} 
\usepackage{algorithm}
\usepackage{algpseudocode}
\usepackage{amsmath}
\usepackage{capt-of} 

\usepackage{amsthm}
\theoremstyle{plain}
\newtheorem{theorem}{Theorem}
\newtheorem{lemma}{Lemma}

\newtheorem{proposition}{Proposition}
\theoremstyle{definition}
\newtheorem{remark}{Remark}
\newtheorem{example}{Example}

\makeatletter
\renewenvironment{proof}[1][\proofname]{\par
	\pushQED{\qed}%
	\normalfont
	\topsep6\p@\@plus6\p@\relax
	\trivlist
	\item[\hskip\labelsep\bfseries #1\@addpunct{.}]%
}{%
	\popQED\endtrivlist\@endpefalse
}
\makeatother

\usepackage{caption} 
\def\tsc#1{\csdef{#1}{\textsc{\lowercase{#1}}\xspace}}
\tsc{WGM}
\tsc{QE}

\begin{document}
\let\WriteBookmarks\relax
\setcounter{topnumber}{5}
\setcounter{bottomnumber}{5}
\setcounter{totalnumber}{10}
\renewcommand{\topfraction}{0.95}
\renewcommand{\bottomfraction}{0.85}
\renewcommand{\textfraction}{0.05}
\renewcommand{\floatpagefraction}{0.80}
\renewcommand{\dbltopfraction}{0.95}
\renewcommand{\dblfloatpagefraction}{0.80}

\shorttitle{An Inner-Outer Iteration Algorithm for Stochastic Lyapunov Matrix Equation}    

\shortauthors{He et~al.}  

\title [mode = title]{An Inner-Outer Iteration Algorithm with Optimal Parameters for Stochastic Lyapunov Matrix Equation}  

\tnotemark[1]
\tnotetext[1]{This work was supported by the Guangdong Basic and Applied Basic Research Foundation of China (No. 2026A1515012144).}
\author[]{Donghuan He}
\credit{Conceptualization, Methodology, Writing -- original draft}

\author[]{Xiaowen Su}
\credit{Methodology, Validation}

\author[]{Feng Wang}
\credit{Software, Formal analysis}

\author[]{Xuesong Chen}
\cormark[1]
\ead{chenxs@gdut.edu.cn}
\credit{Supervision, Writing -- review and editing}

\affiliation[]{organization={School of Mathematics and Statistics, Guangdong University of Technology},
	city={Guangzhou},
	postcode={510520},
	country={P.R. China}}

\cortext[1]{Corresponding author}

\begin{abstract}
This paper proposes an inner--outer (IO) iterative algorithm with optimal parameters for solving stochastic Lyapunov matrix equation associated with discrete-time stochastic linear system. First, under the assumption that the underlying stochastic linear system is asymptotically mean-square stable, the monotonicity and boundedness of the  iterative sequence generated by the proposed algorithm are analyzed. On this basis, a sufficient convergence result is established for the zero initial condition. Second, by deriving the spectral radius of the corresponding iteration matrix, several necessary and sufficient convergence conditions are obtained for arbitrary initial conditions. In addition, the optimal parameter-selection strategies are developed to improve the convergence performance of the algorithm. Finally, numerical examples are presented to verify the theoretical results and demonstrate the advantages of the proposed algorithm over several existing iterative methods.
\end{abstract}

\begin{keywords}
Linear/nonlinear models \sep N-dimensional systems \sep Stochastic control \sep Iterative schemes \sep Parametric optimization
\end{keywords}

\maketitle

\section{Introduction}
 Stochastic linear systems have been widely applied in various fields.For instance, they have been used to construct dynamic segment models for speech recognition \cite{digalakis2002ml} and applied to real-time control with deadline overruns \cite{gallant2025soft}. Owing to their importance, the stability, observability, and detectability of stochastic linear systems have attracted considerable attention. \cite{mclane1969asymptotic} introduced the concept of asymptotic mean-square stability (AMSS) for linear stochastic systems. It was further proved that the existence of a positive definite solution to the corresponding Lyapunov matrix (LM) equation provides a necessary and sufficient condition for the AMSS of the system. These results indicate that the properties of stochastic linear systems are closely related to those of the solutions to their corresponding LM equations. Such equations are referred to as stochastic Lyapunov matrix (SLM) equations. Therefore, developing efficient methods for solving SLM equations is of great importance.

A direct method for solving LM equations is to transform the matrix equations into systems of linear equations using the Kronecker product \cite{jodar1987explicit}. In this way, various methods for solving linear systems can be applied to the resulting matrix equation. However, because this approach involves Kronecker product operations, its computational cost increases significantly for large-scale matrices. Therefore, iterative methods provide an effective alternative for solving LM equations. Several gradient-based iterative algorithms  (\cite{ding2005gradient,zhou2010gradient}) have been developed for solving LM equations. An implicit iterative algorithm for solving SLM equation was proposed in \cite{zhang2017implicit}. An iterative algorithm for discrete periodic LM equation was proposed in \cite{wu2018iterative}. For Markovian jump LM equations,  \cite{tian2018new} proposed an inner--outer (IO) iterative algorithm for solving the CCMJLM equation. By applying multi-step Smith iterations to the inner iteration, \cite{tian2020multi} developed a multi-step Smith IO algorithm for solving the CCMJLM equation. \cite{he2025convergence} proposed a Jacobi gradient-based iterative algorithm for solving the complex conjugate and transpose Sylvester matrix equation. This method can also be applied to LM equation.

Recently, \cite{chen2026explicit} proposed an explicit iterative algorithm with an optimal tuning parameter for solving SLM equation. They compared this method with the implicit algorithm proposed in \cite{zhang2017implicit}. However, both the explicit and implicit iterative algorithms involve only one tuning parameter, which may limit further improvement in convergence performance. Therefore, it is desirable to develop an algorithm with an additional adjustable parameter. Inspired by the IO algorithm proposed in \cite{tian2018new}, this paper develops an IO algorithm for solving SLM equation. Since the proposed algorithm includes an additional parameter, namely the number of inner iteration steps, its parameters can be tuned more flexibly to improve convergence performance.

\textbf{Notation}: $A^T$, $\rho(A)$, and $\sigma(A)$ denote the transpose, spectral radius, and spectrum of $A$, respectively. 
$A\otimes B$ denotes the Kronecker product. 
$A\succeq0$ $(\succ0)$ means that $A$ is positive semidefinite (positive definite), and $A\geq0$ means that all entries of $A$ are nonnegative. 
The notation $i\in\{1,2,\dots,n\}$ is abbreviated as $i\in\mathbb{I}[1,\,n]$. The operators $\operatorname{vec}(\cdot)$ and $\mathbf{E}[\cdot]$ denote vectorization and mathematical expectation, respectively.

\section{Previous results and the proposed algorithm}\label{previous}
Consider the following discrete-time stochastic linear system:
\begin{equation}\label{sto_lin_sys}
	x(t+1) = A_0 x(t) + \sum_{i=1}^{m} A_i x(t) \omega_i(t),
\end{equation}
where $x(t) \in \mathbb{R}^n$ is the state vector, $A_0, A_1, \dots, A_m \in \mathbb{R}^{n \times n}$ are real constant matrices, $\omega_i(t) \in \mathbb{R}, i\in\mathbb{I}[1,\,m]$ are independent random disturbances satisfying $\mathbf{E}[\omega_i(t)] = 0$, $\mathbf{E}[\omega_i^2(t)] = \delta_i$, $\mathbf{E}[\omega_i(t)\omega_j(t)] = 0$, $i \neq j$, $i, j \in \mathbb{I}[1,\,m]$. The stability of the system (\ref{sto_lin_sys}) can be characterized by the following SLM equation,
\begin{equation}\label{sto_lya_eq}
	A_0^\mathrm{T} X A_0 + \sum_{i=1}^{m} \delta_i A_i^\mathrm{T} X A_i - X = -Q, 
\end{equation}
where $A_0, A_1, \dots, A_m \in \mathbb{R}^{n \times n}$ and $Q \succ 0$ are the coefficient matrices and $X \in \mathbb{R}^{n \times n}$ is the unknown matrix.
\begin{lemma}\label{stab_condi}
	\cite{kubrusly1985mean} Consider the discrete-time stochastic system (\ref{sto_lin_sys}). The following assertions are equivalent.\\
		1) The discrete-time stochastic system (\ref{sto_lin_sys}) is asymptotically mean-square stable. \\
		2) For any given positive definite matrix $Q$, the SLM equation (\ref{sto_lya_eq}) with respect to $X$ has a unique positive definite solution. \\
		3) $\rho(\Phi) < 1$, where the matrix $\Phi$ is defined as
		\begin{equation}
			\Phi = A_0^\mathrm{T} \otimes A_0^\mathrm{T} + \sum_{i=1}^{m} \delta_i A_i^\mathrm{T} \otimes A_i^\mathrm{T}.
		\end{equation}
\end{lemma}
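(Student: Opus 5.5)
The plan is to pass everything through the second-moment matrix $P(t)=\mathbf{E}[x(t)x(t)^\mathrm{T}]$ and the linear operator
\[
\mathcal{L}(X)=A_0^\mathrm{T} X A_0+\sum_{i=1}^m \delta_i A_i^\mathrm{T} X A_i,
\]
whose adjoint with respect to $\langle X,Y\rangle=\operatorname{tr}(X^\mathrm{T}Y)$ is $\mathcal{L}^*(P)=A_0PA_0^\mathrm{T}+\sum_i\delta_iA_iPA_i^\mathrm{T}$. I will take $x(0)$ deterministic (or independent of the noise) and $\omega(t)$ independent of $x(t)$, as is implicit in the model.

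\textbf{Step 1: the moment recursion and the matrix representation.} Expanding $x(t+1)x(t+1)^\mathrm{T}$ and using $\mathbf{E}[\omega_i]=0$, $\mathbf{E}[\omega_i\omega_j]=\delta_i\delta_{ij}$, and the independence of $\omega(t)$ from $x(t)$ gives $P(t+1)=\mathcal{L}^*(P(t))$. Using $\operatorname{vec}(AXB)=(B^\mathrm{T}\otimes A)\operatorname{vec}(X)$, we get $\operatorname{vec}(\mathcal{L}(X))=\Phi\operatorname{vec}(X)$ and $\operatorname{vec}(\mathcal{L}^*(P))=\Phi^\mathrm{T}\operatorname{vec}(P)$. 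Since $\Phi$ and $\Phi^\mathrm{T}$ have the same spectrum, $\rho(\mathcal{L})=\rho(\mathcal{L}^*)=\rho(\Phi)$. Both operators map the cone of positive semidefinite matrices into itself.

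\textbf{Step 2: (3)$\Rightarrow$(2) and (3)$\Rightarrow$(1).} If $\rho(\Phi)<1$, then $I-\Phi$ is invertible, so \eqref{sto_lya_eq}, which reads $(I-\Phi)\operatorname{vec}(X)=\operatorname{vec}(Q)$, has a unique solution. That solution is the Neumann series
\[
X=\sum_{k\ge0}\mathcal{L}^k(Q)\succeq Q\succ0,
\]
which gives (2). Likewise $P(t)=(\mathcal{L}^*)^t(P(0))\to0$ because $\rho(\Phi^\mathrm{T})<1$, which is (1).

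\textbf{Step 3: (2)$\Rightarrow$(3).} Let $X\succ0$ solve \eqref{sto_lya_eq}. Since $\mathcal{L}^*$ is a positive map on a proper cone, the Krein--Rutman (finite-dimensional Perron--Frobenius) theorem gives a nonzero $P\succeq0$ with $\mathcal{L}^*(P)=\rho P$, where $\rho=\rho(\Phi)$. Pairing this with the equation $X-\mathcal{L}(X)=Q$ gives
\[
\langle P,Q\rangle=\langle P,X\rangle-\langle \mathcal{L}^*(P),X\rangle=(1-\rho)\langle P,X\rangle.
\]
Both $\langle P,Q\rangle$ and $\langle P,X\rangle$ are positive because $Q,X\succ0$ and $P\succeq0$ is nonzero. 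Hence $\rho<1$.

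\textbf{Step 4: (1)$\Rightarrow$(3).} Asymptotic mean-square stability for every initial state $x(0)=v$ gives $(\mathcal{L}^*)^t(vv^\mathrm{T})\to0$ for all $v$. Every real symmetric matrix is a linear combination of rank-one matrices $vv^\mathrm{T}$, so $(\mathcal{L}^*)^t\to0$ on the space of symmetric matrices. This space is invariant and also contains the Perron eigenvector $P$ from Step 3, so $\rho^tP\to0$, which forces $\rho<1$.

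\textbf{Main obstacle.} The delicate step is Step 3: linking a positive definite solution to $\rho(\Phi)<1$. This link requires the existence of a positive semidefinite eigenvector belonging to the spectral radius, via cone-preserving Perron--Frobenius theory. If that machinery is to be avoided, I would instead iterate $X=\mathcal{L}^k(X)+\sum_{j<k}\mathcal{L}^j(Q)$. Because $X\succeq\sum_{j<k}\mathcal{L}^j(Q)$ and $Q\succeq cX$ for some $c>0$, this yields $\mathcal{L}(X)\preceq(1-c)X$. Consequently $\mathcal{L}^k(Y)\to0$ for every $Y$ in the cone, and hence on all symmetric matrices, which contain the eigenvector. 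Since the lemma is cited from \cite{kubrusly1985mean}, I could alternatively invoke it directly.
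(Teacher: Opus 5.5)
The paper gives no proof of this lemma. It simply quotes Kubrusly and Costa, so your argument has to stand on its own. Steps 1--2 and the pairing identity in Step 3 are correct. The step you flag as delicate, however, has a genuine hole: the identification $\rho=\rho(\Phi)$. The cone of real positive semidefinite matrices has empty interior in $\mathbb{R}^{n\times n}$; it is a proper cone only inside the space $\mathbb{S}^n$ of real symmetric matrices. Krein--Rutman applied there yields a nonzero $P\succeq 0$ with $\mathcal{L}^*(P)=\rho_s P$, where $\rho_s=\rho(\mathcal{L}^*|_{\mathbb{S}^n})$. But $\Phi^{\mathrm{T}}$ represents $\mathcal{L}^*$ on all of $\mathbb{R}^{n\times n}=\mathbb{S}^n\oplus\mathbb{K}^n$, where $\mathbb{K}^n$ is the skew-symmetric matrices. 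Both summands are invariant, so $\sigma(\Phi)$ is the union of the spectra on the two pieces. Nothing you wrote rules out the skew block having the larger spectral radius. Cone invariance alone cannot rule it out either, since for a non-solid cone the spectral radius need not be attained in the cone: the ray $\{te_1:t\ge0\}$ is invariant under $\operatorname{diag}(0,5)$. So Steps 3 and 4 as written prove only $\rho_s<1$. The same hole appears in Step 4 and in your fallback argument. Decay of $(\mathcal{L}^*)^t$ or $\mathcal{L}^k$ on $\mathbb{S}^n$ says nothing about $\mathbb{K}^n$, because $uv^{\mathrm{T}}-vu^{\mathrm{T}}$ is not a real combination of matrices $ww^{\mathrm{T}}$. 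Appealing to the Perron vector just sends you back to the unproved equality $\rho_s=\rho(\Phi)$.

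The missing fact is true and takes a few lines to supply. One way is to work in the real space $\mathbb{H}^n$ of complex Hermitian matrices. Because the $A_i$ are real, $\mathcal{L}^*$ preserves $\mathbb{H}^n$ and its positive semidefinite cone, which is proper there. Also, $\mathbb{C}^{n\times n}=\mathbb{H}^n\oplus\mathrm{i}\,\mathbb{H}^n$ is the complexification of $\mathbb{H}^n$, so $\sigma(\mathcal{L}^*|_{\mathbb{H}^n})=\sigma(\Phi)$. Krein--Rutman then gives a Hermitian $P\succeq0$, $P\neq0$, with eigenvalue exactly $\rho(\Phi)$. Your pairing argument goes through unchanged, using $\operatorname{tr}(PQ)>0$ and $\operatorname{tr}(PX)>0$. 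Taking $\operatorname{Re}P$ even recovers the real symmetric eigenvector you claimed, since $\operatorname{tr}\operatorname{Re}P=\operatorname{tr}P>0$.

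A more elementary way avoids Perron--Frobenius entirely. Let $B_w$ range over the length-$t$ products of $A_0,\sqrt{\delta_1}A_1,\dots,\sqrt{\delta_m}A_m$. Then $(\mathcal{L}^*)^t(uv^{\mathrm{T}})=\sum_w(B_wu)(B_wv)^{\mathrm{T}}$, and hence $\|(\mathcal{L}^*)^t(uv^{\mathrm{T}})\|_2\le\|(\mathcal{L}^*)^t(uu^{\mathrm{T}})\|_2^{1/2}\|(\mathcal{L}^*)^t(vv^{\mathrm{T}})\|_2^{1/2}$; the analogous bound holds for $\mathcal{L}$. So decay on rank-one positive semidefinite matrices extends to every $uv^{\mathrm{T}}$, and hence to all of $\mathbb{R}^{n\times n}$. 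Statement (1) gives that decay directly, and (2) gives it through your bound $\mathcal{L}(X)\preceq(1-c)X$. This yields $\rho(\Phi)<1$ in both (1)$\Rightarrow$(3) and (2)$\Rightarrow$(3).
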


To solve the SLM equation \eqref{sto_lya_eq}, a direct approach is to employ the Smith iteration, which is given as follows:

\begin{equation}\label{simith}
	X_{k+1} = A_0^\mathrm{T} X_k A_0 + \sum_{i=1}^{m} \delta_i A_i^\mathrm{T} X_k A_i + Q.
\end{equation}
However, this algorithm contains no tuning parameter. Thus, its convergence performance cannot be improved through parameter adjustment.

In \cite{zhang2017implicit}, an implicit iterative method was proposed for solving the SLM equation with a single disturbance term. This method can be naturally extended to the SLM equation \eqref{sto_lya_eq}. The corresponding iterative formula is given as follows:
\begin{equation}\label{zhan}
	\begin{aligned}
		A_0^\mathrm{T} X_{k+1} A_0 - X_{k+1}
		= {} & -\gamma 
		\left( 
		\sum_{i=1}^{m} \delta_i A_i^\mathrm{T} X_{k} A_i 
		+ Q 
		\right)  \\
		& + (1-\gamma) 
		\left( 
		A_0^\mathrm{T} X_{k} A_0 - X_{k} 
		\right).
	\end{aligned}
\end{equation}

Since algorithm \eqref{zhan} is implicit, a standard discrete LM equation must be solved at each iteration. Therefore, Chen et al. \cite{chen2026explicit} constructed an explicit iterative formula for solving matrix equation \eqref{sto_lya_eq}. The algorithm is given as follows:
\begin{equation}\label{chen}
	\begin{aligned}
		X_{k+1} = & \gamma \left( A_0^\mathrm{T} X_{k} A_0 + \sum_{i=1}^{m} \delta_i A_i^\mathrm{T} X_{k} A_i + Q \right) \\ & + (1-\gamma) X_{k}.
	\end{aligned}
\end{equation}

However, both Algorithm \ref{zhan} and Algorithm \ref{chen} contain only one tuning parameter, $\gamma$, which limits their flexibility in improving convergence performance. Therefore, an algorithm with an additional adjustable parameter is developed below.

\cite{tian2018new} proposed an inner-outer (IO) iterative algorithm for solving the CCMJLM equation. Inspired by the construction of the IO algorithm, we apply this idea to solve the SLM equation \eqref{sto_lya_eq}. Applying the vectorization operator to \eqref{sto_lya_eq} yields
\begin{equation}\label{sto_lya_eq_kro}
	(I-\Phi)\operatorname{vec}(X)=\operatorname{vec}(Q).
\end{equation}
For a given parameter $\alpha\in \mathbb{R}$, we have the following identity:
\begin{equation}\label{eq:matrix_splitting}
	I-\Phi=(I-\alpha\Phi)-(1-\alpha)\Phi.
\end{equation}
Based on \eqref{eq:matrix_splitting}, the outer iteration for solving \eqref{sto_lya_eq_kro} is constructed as follows:
\begin{equation}\label{eq:outer_vec}
	\begin{aligned}
	(I-\alpha\Phi)\operatorname{vec}(X_{k+1})
	=
	&(1-\alpha)\Phi\operatorname{vec}(X_k)
	+
	\operatorname{vec}(Q).
	\end{aligned}
\end{equation}
Applying the inverse vectorization operation, \eqref{eq:outer_vec} is equivalent to
\begin{equation}\label{eq:outer_matrix}
	X_{k+1}
	-
	\alpha
	\left(
	A_0^{T}X_{k+1}A_0
	+
	\sum_{i=1}^{m}\delta_i A_i^{T}X_{k+1}A_i
	\right)
	=
	W_k,
\end{equation}
where
\[
W_k
=
(1-\alpha)
\left(
A_0^{T}X_kA_0
+
\sum_{i=1}^{m}\delta_i A_i^{T}X_kA_i
\right)
+
Q.
\]
To obtain the iterative estimate $X_{k+1}$, the following inner iteration is employed:
\begin{equation}\label{sto_inner_eq}
	\begin{aligned}
			Y_{j+1}
		=
		& \alpha
		\left(
		A_0^{T}Y_jA_0
		+
		\sum_{i=1}^{m}\delta_i A_i^{T}Y_jA_i
		\right)
		+
		W_k, \\ 
		&j\in \mathbb{I}[0,\,l-1].
	\end{aligned}
\end{equation}
where the initial condition is $Y_0=X_k$, and $Y_l$ is taken as an approximation of $X_{k+1}$.

In the inner iteration \eqref{sto_inner_eq}, replacing $Y_j$ with $X_{k,j}$, where $X_{k,j}$ denotes the iterative solution generated by the $j$th inner iteration within the $k$th outer iteration, yields the proposed IO iterative algorithm, which is summarized as follows:
\begin{equation}\label{IO_alg}
	\left\{
	\begin{aligned}
		X_{k,0} &= X_k,\quad X_{k,l}=X_{k+1},\\
		X_{k,j+1}
		&=
		\alpha
		\left(
		A_0^{T}X_{k,j}A_0
		+
		\sum_{i=1}^{m}\delta_i A_i^{T}X_{k,j}A_i
		\right)  \\
		&\quad
		+
		(1-\alpha)
		\left(
		A_0^{T}X_kA_0
		+
		\sum_{i=1}^{m}\delta_i A_i^{T}X_kA_i
		\right)\\
		&\quad + Q,\\
		&\hspace{-1.5em} j\in \mathbb{I}[0,\,l-1],\quad k=0,1,2,\ldots .
	\end{aligned}
	\right.
\end{equation}

Several lemmas are presented at the end of this section for the subsequent convergence analysis of the algorithm \eqref{IO_alg}.

\begin{lemma}\label{lemma converge of sequence}
	\cite{bibby1974axiomatisations} Let $\{X_k\}$ be a sequence of positive definite matrices satisfying the following conditions:
		1)  $X_k \preceq X_{k+1}$ for all integers $k \ge 0$. \\
		2)  There exists a positive definite matrix $X$ such that $X_k \preceq X$ for all integers $k \ge 0$.
	Then the sequence $\{X_k\}$ converges.
\end{lemma}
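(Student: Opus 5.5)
The plan is to reduce matrix convergence to the convergence of scalar sequences, using the monotone convergence theorem for real numbers, and then to recover the matrix entries by polarization. Throughout I take ``positive definite'' in the usual sense that includes symmetry, so every $X_k$ and $X$ is symmetric. The relation $A\preceq B$ then means $x^T(B-A)x\ge 0$ for all $x\in\mathbb{R}^n$.

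\emph{Quadratic forms.} Fix an arbitrary $x\in\mathbb{R}^n$ and set $q_k(x)=x^TX_kx$. By condition 1), $q_k(x)\le q_{k+1}(x)$. By condition 2), $q_k(x)\le x^TXx$. Since $X_0\succ 0$, we also have $q_k(x)\ge q_0(x)\ge 0$. Hence $\{q_k(x)\}$ is a nondecreasing real sequence that is bounded above, so it converges to some limit $q(x)$. This holds for every $x$.

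\emph{Entries via polarization.} For symmetric $X_k$ and any $x,y$,
\[
x^TX_ky=\tfrac14\bigl(q_k(x+y)-q_k(x-y)\bigr).
\]
Both terms on the right converge by the previous step, so $x^TX_ky$ converges. Taking $x=e_i$ and $y=e_j$ shows that each entry $(X_k)_{ij}$ converges. Call the entrywise limit $X_\infty$; then $X_k\to X_\infty$ in any matrix norm.

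\emph{Properties of the limit.} $X_\infty$ is symmetric, because each $X_k$ is. Passing to the limit in $q_0(x)\le q_k(x)\le x^TXx$ gives $X_0\preceq X_\infty\preceq X$. In particular $X_\infty\succeq X_0\succ 0$, so the limit is positive definite. This last point is not required for convergence itself, but it is useful for the subsequent analysis.

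There is no substantive obstacle. The only point needing care is that the polarization identity requires symmetry of $X_k$, which is why positive definiteness is read as including symmetry.
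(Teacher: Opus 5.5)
Your argument is correct and complete. Monotone convergence of the scalar sequences $x^TX_kx$, followed by polarization to recover the entries, is the standard proof of this fact, and you rightly note that polarization needs the symmetry built into positive definiteness. The paper gives no proof of its own here and simply cites Bibby (1974), so your self-contained argument supplies what the citation stands for.
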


\begin{lemma}\label{Perro_Fro}
	\cite{horn2012matrix} Let $A\in\mathbb{R}^{n\times n}$ be a nonnegative matrix, then the spectral radius $\rho(A)$ is an eigenvalue of $A$.
\end{lemma}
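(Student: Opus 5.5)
We would prove the statement directly with a resolvent comparison argument, which needs only the Neumann series and the entrywise ordering ``$\geq$'' fixed in the Notation. Let $r=\rho(A)$. If $r=0$, every eigenvalue of $A$ is $0$, so $\rho(A)=0\in\sigma(A)$ and there is nothing to prove. Assume from now on that $r>0$.

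\textbf{Step 1 (Neumann series and entrywise domination).} For any $z\in\mathbb{C}$ with $|z|>r$, the series
\[
(zI-A)^{-1}=\sum_{k=0}^{\infty} z^{-k-1}A^k
\]
converges, because $\rho(A/z)<1$. Since $A\geq 0$, each $A^k\geq 0$. Taking entrywise absolute values term by term then gives
\[
\bigl|(zI-A)^{-1}\bigr| \leq \sum_{k=0}^{\infty}|z|^{-k-1}A^k=(|z|I-A)^{-1}.
\]
Here $|\cdot|$ denotes the entrywise modulus. Consequently, for any monotone norm such as $\|\cdot\|_\infty$, we get $\|(zI-A)^{-1}\|\leq \|(|z|I-A)^{-1}\|$.

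\textbf{Step 2 (blow-up along a ray).} Choose $\lambda\in\sigma(A)$ with $|\lambda|=r$. For $t>r$ set $z_t=t\lambda/r$, so that $|z_t|=t$ and $z_t\to\lambda$ as $t\downarrow r$. We use the standard fact that $\|(zI-B)^{-1}\|\geq 1/\operatorname{dist}(z,\sigma(B))$ for any induced norm. To see it, take a unit eigenvector $v$ with $Bv=\mu v$; then $(zI-B)^{-1}v=(z-\mu)^{-1}v$. It follows that $\|(z_tI-A)^{-1}\|\geq 1/|z_t-\lambda|\to\infty$. By Step 1 this forces $\|(tI-A)^{-1}\|\to\infty$ as $t\downarrow r$ along the reals.

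\textbf{Step 3 (conclusion).} Suppose, for contradiction, that $r\notin\sigma(A)$. Then $tI-A$ is invertible at $t=r$, and $t\mapsto (tI-A)^{-1}$ is continuous on a neighbourhood of $r$. In particular it stays bounded as $t\downarrow r$, which contradicts Step 2. Hence $\rho(A)=r\in\sigma(A)$.

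The only delicate point is Step 1, namely justifying the termwise entrywise comparison. It is legitimate because the series converges absolutely for $|z|>r$ and all terms $A^k$ are entrywise nonnegative. The remaining steps are routine.
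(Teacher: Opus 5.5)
Your proof is correct, and it takes a different route from the one behind the paper's citation. The paper gives no proof of its own; it cites Horn and Johnson. There, the fact (Theorem 8.3.1) is obtained by perturbing $A$ to the strictly positive matrices $A+\epsilon J$, with $J$ the all-ones matrix. Perron's theorem then gives positive unit eigenvectors for $\rho(A+\epsilon J)$. Letting $\epsilon\downarrow 0$, compactness of the unit sphere and the monotonicity $\rho(A+\epsilon J)\geq\rho(A)$ yield a nonzero $x\geq 0$ with $Ax=\rho x$ for some $\rho\geq\rho(A)$, which forces $\rho=\rho(A)$.

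Your resolvent comparison avoids Perron's theorem for positive matrices entirely. It uses only the Neumann series, the entrywise bound $|(zI-A)^{-1}|\leq(|z|I-A)^{-1}$, and continuity of inversion. It is essentially the argument that places the spectral radius in the spectrum for positive operators on Banach lattices, where no Perron theory is available.

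The trade-off is that, as written, you get only $\rho(A)\in\sigma(A)$, whereas the textbook route also delivers a nonnegative eigenvector. Membership in the spectrum is all the paper uses in the proof of Theorem~\ref{conv_th_phi}. You could also recover the eigenvector within your framework. Take a limit point, as $t\downarrow r$, of $x_t=(tI-A)^{-1}\mathbf{1}/\|(tI-A)^{-1}\mathbf{1}\|_\infty\geq 0$. This works because $\|(tI-A)^{-1}\mathbf{1}\|_\infty=\|(tI-A)^{-1}\|_\infty\to\infty$ by nonnegativity of the resolvent, so $(tI-A)x_t\to 0$.

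One tidying point: Steps 1 and 2 must use the same norm, and it has to be both monotone (for Step 1) and induced (for Step 2). Fix the max-row-sum norm $\|\cdot\|_\infty$ on $\mathbb{C}^{n\times n}$ from the start. Also note that $|z_t-\lambda|=t-r$ exactly, which gives the explicit bound $\|(tI-A)^{-1}\|_\infty\geq 1/(t-r)$.
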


\section{Convergence of the proposed algorithm}\label{Convergence of the proposed algorithm}
	
The convergence of the proposed algorithm \eqref{IO_alg} is analyzed in this section. First, the monotonicity and boundedness of the iterative sequence generated by Algorithm \eqref{IO_alg} are established.
For the convenience of the subsequent proof, a linear operator $\mathcal{L}: \mathbb{R}^{n\times n} \rightarrow \mathbb{R}^{n\times n}$ is introduced here and defined as follows:
\[
\mathcal{L}(X)
=
A_0^{T}XA_0+\sum_{i=1}^{m}\delta_i A_i^{T}XA_i.
\]
Since $\delta_i \geq 0$, the operator $\mathcal{L}$ is positive. That is, $X \succeq Y$ implies $\mathcal{L}(X) \succeq \mathcal{L}(Y)$.

\begin{lemma}\label{lemma mono}
Assume that the stochastic linear system \eqref{sto_lin_sys} is asymptotically mean-square stable. Let $Q$ be an arbitrary positive definite matrix in matrix equation \eqref{sto_lya_eq}, let $0<\alpha\leq1$, and let the number of inner iteration steps satisfy $l \ge 1$. Then, under the zero initial condition, both the iterative sequence $\{X_k\}$ and the inner iterative sequence $\{X_{k,j}\}$ generated by Algorithm \eqref{IO_alg} are monotonically nondecreasing, that is,
	\[
	\begin{aligned}
		&X_k\preceq X_{k+1},
		\quad X_{k,j} \preceq X_{k,j+1} \\
		 &j\in \mathbb{I}[0,\,l-1],\quad k=0,1,2,\ldots .
	\end{aligned}
	\]
\end{lemma}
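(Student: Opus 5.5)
The plan is a double induction: an outer induction on $k$, and inside it an inner induction on $j$. Throughout we use only that $\mathcal{L}$ is a positive linear operator and that $0<\alpha\le 1$, so both $\alpha$ and $1-\alpha$ are nonnegative.

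First we rewrite the inner step of \eqref{IO_alg} as
\[
X_{k,j+1}=\alpha\mathcal{L}(X_{k,j})+(1-\alpha)\mathcal{L}(X_k)+Q .
\]
Subtracting two consecutive inner steps eliminates the terms $(1-\alpha)\mathcal{L}(X_k)+Q$, which do not depend on $j$. This gives
\[
X_{k,j+1}-X_{k,j}=\alpha\mathcal{L}(X_{k,j}-X_{k,j-1}),\qquad j\ge 1 .
\]
By positivity of $\mathcal{L}$ and $\alpha>0$, it follows that $X_{k,j}\succeq X_{k,j-1}$ implies $X_{k,j+1}\succeq X_{k,j}$. The inner monotonicity within the $k$th outer step therefore reduces to the base case $X_{k,1}\succeq X_{k,0}=X_k$. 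Since $X_{k,0}=X_k$, that base case reads
\[
X_{k,1}-X_k=\mathcal{L}(X_k)+Q-X_k .
\]
So the crux is to show $\mathcal{L}(X_k)+Q-X_k\succeq 0$ for every $k$. Once this holds, the inner chain gives $X_{k+1}=X_{k,l}\succeq X_{k,0}=X_k$ because $l\ge1$, and outer monotonicity follows.

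We prove $R_k:=\mathcal{L}(X_k)+Q-X_k\succeq0$ by induction on $k$. For $k=0$ we have $X_0=0$, so $R_0=Q\succ0$. For the inductive step, assume $R_k\succeq0$. The argument above then shows that the inner sequence is nondecreasing, so every increment $X_{k,j+1}-X_{k,j}$ is $\succeq 0$, and $X_{k+1}\succeq X_{k,l-1}$. We then compute $R_{k+1}$ using the last inner step:
\[
X_{k+1}=X_{k,l}=\alpha\mathcal{L}(X_{k,l-1})+(1-\alpha)\mathcal{L}(X_k)+Q .
\]
This gives
\[
R_{k+1}=\mathcal{L}(X_{k+1})+Q-X_{k+1}
=\alpha\mathcal{L}(X_{k+1}-X_{k,l-1})+(1-\alpha)\mathcal{L}(X_{k+1}-X_k).
\]
Both differences inside $\mathcal{L}$ are positive semidefinite: $X_{k+1}\succeq X_{k,l-1}$ by the inner monotonicity, and $X_{k+1}\succeq X_k$ by the outer step just established. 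Since $\alpha\ge0$ and $1-\alpha\ge0$, positivity of $\mathcal{L}$ yields $R_{k+1}\succeq0$, which closes the induction.

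The main obstacle is exactly this inductive step for $R_{k+1}$: one has to find the right algebraic decomposition of the residual. The identity above, obtained by substituting the final inner step, is the one we will use; it is also the only place where the hypothesis $\alpha\le1$ is needed. The stability hypothesis is not required for monotonicity; it will matter only for boundedness in later results.
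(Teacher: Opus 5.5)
Your proof is correct and matches the argument the paper relies on. The paper omits this proof, but its proof of Theorem~\ref{th_mo_bo} invokes the same inductive fact $\mathcal{R}(X_k)=\mathcal{L}(X_k)+Q-X_k\succeq 0$ together with $X_{k,1}=\mathcal{L}(X_k)+Q\preceq X_{k,l}=X_{k+1}$, and your identity $\mathcal{R}(X_{k+1})=\alpha\mathcal{L}(X_{k+1}-X_{k,l-1})+(1-\alpha)\mathcal{L}(X_{k+1}-X_k)$, combined with $X_{k,j+1}-X_{k,j}=\alpha\mathcal{L}(X_{k,j}-X_{k,j-1})$, supplies the omitted induction, including the case $l=1$.
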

    
Because of the limitation of length, the proof of this lemma is omitted 

\begin{lemma}\label{lemma bou}
	Assume that the stochastic linear system \eqref{sto_lin_sys} is asymptotically mean-square stable and that $X^*$ is the unique positive definite solution of matrix equation \eqref{sto_lya_eq}. Let $Q$ be an arbitrary positive definite matrix in the equation \eqref{sto_lya_eq}, let $0<\alpha\leq1$, and let $l \ge 1$. Then, under the zero initial condition, the iterative sequence ${X_k}$ generated by the proposed IO algorithm \eqref{IO_alg} is bounded above by $X^*$, that is,
	\[
	X_k\preceq X^*,
	\qquad k=0,1,2,\ldots .
	\]
\end{lemma}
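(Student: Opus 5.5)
We argue by a double induction, on the outer index $k$ and, inside each outer step, on the inner index $j$. Throughout we use two facts. First, the operator $\mathcal{L}$ is positive. Second, $X^*$ is a fixed point of the inner map whenever the outer iterate lies below it. Indeed, since $X^*=\mathcal{L}(X^*)+Q$, we can write
\[
X^*=\alpha\mathcal{L}(X^*)+(1-\alpha)\mathcal{L}(X^*)+Q .
\]
The inner update in \eqref{IO_alg} has exactly this form, except that the last occurrence of $X^*$ is replaced by $X_k$ and the first by $X_{k,j}$. The base case is immediate: $X_0=0\preceq X^*$, because $X^*\succ0$ by Lemma~\ref{stab_condi}.

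For the inductive step, assume $X_k\preceq X^*$. We show by induction on $j$ that $X_{k,j}\preceq X^*$ for all $j\in\mathbb{I}[0,\,l]$. For $j=0$ this is the hypothesis, since $X_{k,0}=X_k$. If $X_{k,j}\preceq X^*$, subtracting the inner update from the fixed-point identity above gives
\[
X^*-X_{k,j+1}=\alpha\,\mathcal{L}(X^*-X_{k,j})+(1-\alpha)\,\mathcal{L}(X^*-X_k).
\]
Both arguments of $\mathcal{L}$ are positive semidefinite, so both terms are positive semidefinite by positivity of $\mathcal{L}$. Both coefficients are nonnegative because $0<\alpha\le1$. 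Hence $X^*-X_{k,j+1}\succeq0$. Taking $j=l$ yields $X_{k+1}=X_{k,l}\preceq X^*$, which closes the outer induction.

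There is no real obstacle here. The one step needing care is that the restriction $0<\alpha\le1$ is used precisely to make $1-\alpha\ge0$; for $\alpha>1$ the second term could be negative semidefinite and the argument would break. The monotonicity in Lemma~\ref{lemma mono} is not needed for this bound. The two lemmas are later combined with Lemma~\ref{lemma converge of sequence} to obtain convergence.
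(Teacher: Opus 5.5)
Your proof is correct and complete. The paper omits its own proof of this lemma (``The proof of this lemma is also omitted''), so there is nothing to compare it against directly. Your double induction is the natural argument, and it matches the induction the paper invokes for $\mathcal{L}(X_k)+Q-X_k\succeq 0$ in the proof of Theorem~\ref{th_mo_bo}.

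What the argument needs is exactly what you use: the error identity $X^*-X_{k,j+1}=\alpha\,\mathcal{L}(X^*-X_{k,j})+(1-\alpha)\,\mathcal{L}(X^*-X_k)$, positivity of $\mathcal{L}$ (which comes from $\delta_i\ge 0$), and $0\le\alpha\le 1$. The argument also covers $\alpha=0$.

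One wording fix: $X^*$ is not in general a fixed point of the inner map. It is one only when $\alpha=1$ or $\mathcal{L}(X^*-X_k)=0$. Substituting $X^*$ into the inner map gives $X^*-(1-\alpha)\mathcal{L}(X^*-X_k)\preceq X^*$, so $X^*$ is an upper solution. Your computation never uses the fixed-point claim, so the proof is unaffected.

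A shorter variant: collapsing the inner loop gives $X^*-X_{k+1}=\mathcal{R}_{\alpha,l}(X^*-X_k)$, where $\mathcal{R}_{\alpha,l}=\alpha^l\mathcal{L}^l+(1-\alpha)\sum_{s=0}^{l-1}\alpha^s\mathcal{L}^{s+1}$ is the operator form of $R_{\alpha,l}$. This is a nonnegative combination of powers of $\mathcal{L}$ when $0\le\alpha\le 1$, hence positive, so $X^*-X_k=\mathcal{R}_{\alpha,l}^k(X^*)\succeq 0$ directly.
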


The proof of this lemma is also omitted. Based on the above two lemmas, the following conclusion can be obtained.

\begin{theorem}\label{th_mo_bo}
	Assume that the stochastic linear system \eqref{sto_lin_sys} is asymptotically mean-square stable and that $X^{*}$ is the unique positive definite solution of matrix equation \eqref{sto_lya_eq}. Let $Q$ be an arbitrary positive definite matrix in the equation \eqref{sto_lya_eq}, let $0<\alpha\leq1$, and let $l \ge 1$. Then, under the zero initial condition, the iterative sequence ${X_k}$ generated by the proposed IO algorithm \eqref{IO_alg} converges to $X^*$.
\end{theorem}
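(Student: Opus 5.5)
The plan is to combine Lemma~\ref{lemma mono} and Lemma~\ref{lemma bou} with the monotone convergence principle of Lemma~\ref{lemma converge of sequence}, and then to identify the limit by passing to the limit in the iteration. There is one technical point. Lemma~\ref{lemma converge of sequence} is stated for positive definite sequences, but $X_0=0$ is only semidefinite. I will therefore apply it to the shifted sequence $\{X_k\}_{k\ge1}$. From \eqref{IO_alg} with $X_0=0$ we get $X_{0,1}=Q\succ0$. Lemma~\ref{lemma mono} gives $X_{0,1}\preceq X_{0,l}=X_1$, so $X_1\succeq Q\succ0$, and monotonicity then gives $X_k\succeq Q\succ0$ for all $k\ge1$. (Alternatively, one can apply the lemma to $X_k+I$.) Lemma~\ref{lemma mono} supplies $X_k\preceq X_{k+1}$, and Lemma~\ref{lemma bou} supplies $X_k\preceq X^*$ with $X^*\succ0$. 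Hence $\{X_k\}$ converges to some limit $\bar X$, and $Q\preceq \bar X\preceq X^*$.

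To identify the limit, I will write one outer step as an explicit affine map $X_{k+1}=F(X_k)$. With $\mathcal{L}$ as defined above, the inner recursion is $X_{k,j+1}=\alpha\mathcal{L}(X_{k,j})+W_k$ with $W_k=(1-\alpha)\mathcal{L}(X_k)+Q$. Unrolling it $l$ times gives
\[
X_{k+1}=\alpha^l\mathcal{L}^l(X_k)+\sum_{j=0}^{l-1}\alpha^j\mathcal{L}^j\bigl((1-\alpha)\mathcal{L}(X_k)+Q\bigr).
\]
This is a continuous (linear plus constant) function of $X_k$, so letting $k\to\infty$ yields $\bar X=F(\bar X)$.

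Next I simplify this fixed-point relation. Put $S=\sum_{j=0}^{l-1}\alpha^j\mathcal{L}^j$ and use the telescoping identity $(\mathcal{I}-\alpha\mathcal{L})S=\mathcal{I}-\alpha^l\mathcal{L}^l$. The fixed-point relation reads $\bar X-\alpha^l\mathcal{L}^l(\bar X)=S\bigl((1-\alpha)\mathcal{L}(\bar X)+Q\bigr)$. Its left side equals $S\bigl((\mathcal{I}-\alpha\mathcal{L})\bar X\bigr)$, so
\[
S\bigl(\bar X-\mathcal{L}(\bar X)-Q\bigr)=0 .
\]
It then suffices to show that $S$ is injective. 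In vectorized form $S$ corresponds to $\sum_{j=0}^{l-1}\alpha^j\Phi^j$. Its eigenvalues are $\sum_{j=0}^{l-1}(\alpha\lambda)^j$ for $\lambda\in\sigma(\Phi)$, and $|\alpha\lambda|<1$ by Lemma~\ref{stab_condi} and $0<\alpha\le1$. Such an eigenvalue is zero only if $(\alpha\lambda)^l=1$ with $\alpha\lambda\neq1$, which would force $|\alpha\lambda|=1$, a contradiction. Hence $S$ is invertible, so $\bar X=\mathcal{L}(\bar X)+Q$, i.e.\ $\bar X$ solves \eqref{sto_lya_eq}. Since $\bar X\succ0$, uniqueness in Lemma~\ref{stab_condi} gives $\bar X=X^*$.

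The main obstacle is this last identification step: showing the fixed point of the $l$-step map coincides with the solution of \eqref{sto_lya_eq}, which rests on the invertibility of $S$. A simpler route would be to use $\bar X\preceq X^*$ together with the inequality $\bar X\succeq\mathcal{L}(\bar X)+Q$ and the stability of $\mathcal{L}$. However, the $S$-invertibility argument via $\rho(\alpha\Phi)<1$ is cleaner, and it is the route I will take.
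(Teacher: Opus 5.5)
Your proof is correct, but you identify the limit by a different mechanism than the paper does.

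The paper never unrolls the inner loop. It notes that the first inner iterate is exactly the Smith step, $X_{k,1}=\mathcal{L}(X_k)+Q$. It then uses inner monotonicity to get the sandwich $X_k\preceq X_{k,1}\preceq X_{k,l}=X_{k+1}$. Hence $0\preceq\mathcal{R}(X_k)=\mathcal{L}(X_k)+Q-X_k\preceq X_{k+1}-X_k\to 0$, so $\mathcal{L}(X_\infty)+Q=X_\infty$ with no spectral input at that stage.

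You instead pass to the limit in the full $l$-step affine map and cancel $S=\sum_{j=0}^{l-1}\alpha^j\mathcal{L}^j$ using $\rho(\alpha\Phi)<1$. This is the matrix form of the identity $I-R_{\alpha,l}=S_{\alpha,l}(I-\Phi)$ from the proof of Theorem~\ref{pu_convge_condi}. The trade-off is as follows:
\begin{itemize}
\item The paper's squeeze is more elementary, and it gives the a posteriori bound $\mathcal{R}(X_k)\preceq X_{k+1}-X_k$ for free.
\item Your identification step does not use the order structure at all. It therefore shows that the limit of any convergent IO sequence solves the equation, for any starting matrix and any $\alpha$ with $S$ nonsingular.
\end{itemize}

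You are also more careful than the paper on two points. You cite the boundedness lemma explicitly, which the paper's proof omits. You also deal with $X_0=0$ not being positive definite, which the monotone-sequence lemma formally requires.

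Finally, the ``simpler route'' you sketch at the end is essentially the paper's. Your inequality $\bar X\succeq\mathcal{L}(\bar X)+Q$ follows from $X_{k+1}\succeq X_{k,1}$. Combining it with the reverse inequality from $X_k\preceq X_{k,1}$ gives equality directly, without needing $\bar X\preceq X^*$ or iterating $\mathcal{L}$.
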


\begin{proof}
	According to  \textbf{\Cref{lemma converge of sequence}} and \textbf{\Cref{lemma mono}}, the sequence $\{X_k\}$ is convergent.
	Let $\lim\limits_{k\to\infty} X_k = X_\infty$.
	 
	It can be easily proved by mathematical induction that
	\[
	\mathcal{R}(X_k)=\mathcal{L}(X_k)+Q-X_k\succeq 0 .
	\]
	Combining $X_{k,1}=\mathcal{L}(X_k)+Q$ with  $X_{k,1}\preceq X_{k,l} = X_{k+1}$, one obtains
	\[
	0\preceq \mathcal{R}(X_k)
	=
	X_{k,1}-X_k
	\preceq
	X_{k+1}-X_k .
	\]
	Since $\{X_k\}$ is convergent, it follows that
	\[
	\lim_{k\to\infty}(X_{k+1}-X_k) = 0,
	\]
	Consequently,
	\[
	\lim_{k\to\infty}\mathcal{R}(X_k)= \big[\mathcal{L}(X_k)+Q-X_k\big] = 0.
	\]
	That is,
	\[
	\mathcal{L}(X_\infty)+Q-X_\infty=0.
	\]
	This is equivalent to
	\[
	A_0^{T}X_\infty A_0
	+
	\sum_{i=1}^{m}\delta_i A_i^{T}X_\infty A_i
	-
	X_\infty
	=
	-Q .
	\]
Hence, $X_\infty$ is a solution of SLM equation \eqref{sto_lya_eq}. Since stochastic system \eqref{sto_lin_sys} is AMSS, SLM equation \eqref{sto_lya_eq} has a unique positive definite solution $X^{*}$. Therefore, $X_\infty=X^{*}$. This completes the proof.
\end{proof}

Since the convergence of Algorithm~\eqref{IO_alg} is essentially determined by whether the spectral radius of its iteration matrix is less than one, the following theorem gives a necessary and sufficient condition for its convergence.

\begin{theorem}\label{pu_convge_condi}
Assume that matrix equation \eqref{sto_lya_eq} has a unique solution $X^{*}$. Let $\alpha \in \mathbb{R}$ and $l\geq 1$. Then, for any initial matrix $X_0$, the sequence ${X_k}$ generated by the proposed IO iterative algorithm \eqref{IO_alg} converges to $X^{*}$ if and only if the parameters $\alpha$ and $l$ are selected such that 
	\begin{equation}\label{eq:necessary_sufficient_condition}
		\rho(R_{\alpha,l})<1,
	\end{equation}
	where
	\begin{equation}
		R_{\alpha,l}
		=
		(\alpha\Phi)^l
		+
		(1-\alpha)
		\sum_{s=0}^{l-1}(\alpha\Phi)^s\Phi .
		\label{eq:iteration_matrix_R_alpha_l}
	\end{equation}
	Equivalently, if $\mu_i$, $i \in \mathbb{I}[1,\,n^2]$ are the eigenvalues of $\Phi$, then the IO iteration algorithm \eqref{IO_alg} converges if and only if
	\begin{equation}
		\left|
		(\alpha\mu_i)^l
		+
		(1-\alpha)\mu_i
		\sum_{s=0}^{l-1}(\alpha\mu_i)^s
		\right|
		<1, \quad i \in \mathbb{I}[1,\,n^2].
		\label{eq:eigenvalue_condition}
	\end{equation}
\end{theorem}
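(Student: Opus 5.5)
We first rewrite the IO algorithm \eqref{IO_alg} in vectorized form and identify its iteration matrix. Write $x_k=\operatorname{vec}(X_k)$, $x_{k,j}=\operatorname{vec}(X_{k,j})$ and $q=\operatorname{vec}(Q)$. Since $\operatorname{vec}(\mathcal{L}(X))=\Phi\operatorname{vec}(X)$, the inner step reads
\[
x_{k,j+1}=\alpha\Phi x_{k,j}+w_k,\qquad w_k=(1-\alpha)\Phi x_k+q,\qquad x_{k,0}=x_k .
\]
Unrolling this linear recursion by induction on $j$ gives
\[
x_{k,l}=(\alpha\Phi)^l x_k+\sum_{s=0}^{l-1}(\alpha\Phi)^s w_k .
\]
Substituting $w_k$ yields $x_{k+1}=R_{\alpha,l}x_k+c$, where $R_{\alpha,l}$ is as in \eqref{eq:iteration_matrix_R_alpha_l} and $c=\sum_{s=0}^{l-1}(\alpha\Phi)^s q$. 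Thus the algorithm is a stationary affine iteration.

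Next we check that $x^*=\operatorname{vec}(X^*)$ is a fixed point, i.e.\ $x^*=R_{\alpha,l}x^*+c$. Using $(I-\Phi)x^*=q$, write $q=(I-\alpha\Phi)x^*-(1-\alpha)\Phi x^*$. Hence
\[
(1-\alpha)\Phi x^*+q=(I-\alpha\Phi)x^* .
\]
The telescoping identity $\sum_{s=0}^{l-1}(\alpha\Phi)^s(I-\alpha\Phi)=I-(\alpha\Phi)^l$ then gives $R_{\alpha,l}x^*+c=(\alpha\Phi)^lx^*+\bigl(I-(\alpha\Phi)^l\bigr)x^*=x^*$. No invertibility of $I-\alpha\Phi$ is needed, which is why $\alpha\in\mathbb{R}$ is allowed.

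Setting $e_k=x_k-x^*$, we obtain $e_{k+1}=R_{\alpha,l}e_k$, so $e_k=R_{\alpha,l}^k e_0$. Convergence for every $X_0$ means $R_{\alpha,l}^k e_0\to0$ for every $e_0\in\mathbb{R}^{n^2}$, since $e_0=x_0-x^*$ is arbitrary. This is equivalent to $R_{\alpha,l}^k\to0$, which holds if and only if $\rho(R_{\alpha,l})<1$. For the necessity direction with a possibly complex eigenvalue $\lambda$ with $|\lambda|\ge1$ and eigenvector $v$, take $e_0=\operatorname{Re}v$ or $e_0=\operatorname{Im}v$. At least one of these does not tend to zero under iteration, because $R_{\alpha,l}$ is real.

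For the eigenvalue form \eqref{eq:eigenvalue_condition}, note that $R_{\alpha,l}=f(\Phi)$ with the polynomial
\[
f(z)=(\alpha z)^l+(1-\alpha)z\sum_{s=0}^{l-1}(\alpha z)^s .
\]
By the spectral mapping theorem (via a Schur or Jordan triangularization of $\Phi$), $\sigma(R_{\alpha,l})=\{f(\mu_i)\}$. Therefore $\rho(R_{\alpha,l})<1$ if and only if $|f(\mu_i)|<1$ for all $i\in\mathbb{I}[1,\,n^2]$. The main care needed is in the fixed-point verification and in justifying the spectral mapping for the polynomial $f$; both are standard, so no step is a serious obstacle.
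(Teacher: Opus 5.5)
Your proposal is correct and follows essentially the same route as the paper. You vectorize, unroll the inner loop to $x_{k+1}=R_{\alpha,l}x_k+S_{\alpha,l}q$ with $S_{\alpha,l}=\sum_{s=0}^{l-1}(\alpha\Phi)^s$, verify the fixed point via $I-R_{\alpha,l}=S_{\alpha,l}(I-\Phi)$ (exactly your telescoping computation), reduce to $e_{k+1}=R_{\alpha,l}e_k$, and read off the eigenvalues $g_{\alpha,l}(\mu_i)$ by spectral mapping. Your extra care on necessity (taking $e_0=\operatorname{Re}v$ or $\operatorname{Im}v$ so the initial error is real) and on the polynomial spectral mapping fills in details the paper only asserts.
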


\begin{proof}
	Let
	$
	x_k=\operatorname{vec}(X_k)
	$
	and
	$
	q=\operatorname{vec}(Q).
	$
	Vectorizing both sides of \eqref{IO_alg}, it follows that
	\[
	x_{k,j+1}
	=
	\alpha\Phi x_{k,j}
	+
	(1-\alpha)\Phi x_k
	+
	q,
	\quad j\in \mathbb{I}[0,\,l-1].
	\]
	Then, the proposed IO iteration \eqref{IO_alg} admits the form
	\begin{equation}
		x_{k+1}=R_{\alpha,l}x_k+S_{\alpha,l}q,
		\label{eq:linear_iteration}
	\end{equation}
	where $R_{\alpha,l}$ is defined in \eqref{eq:iteration_matrix_R_alpha_l} and
	$
	S_{\alpha,l}
	=
	\sum_{s=0}^{l-1}(\alpha\Phi)^s .
	$
	
	Let
	$
	x^{*}=\operatorname{vec}(X^{*}).
	$
    It is clear that
	\[
	(I-R_{\alpha,l})x^{*}
	=
	S_{\alpha,l}(I-\Phi)x^{*}
	=
	S_{\alpha,l}q.
	\]
	which implies
	\[
	x^{*}=R_{\alpha,l}x^{*}+S_{\alpha,l}q.
	\]
	
	Let $e_k=x_k-x^{*}$. Subtracting the above equality from \eqref{eq:linear_iteration}, it follows that
	\begin{equation}\label{eq:error_iteration}
		e_{k+1}=R_{\alpha,l}e_k.
	\end{equation}
	This indicates that the IO iteration algorithm \eqref{IO_alg} converges to the unique solution $X^{*}$ if and only if  the condition \eqref{eq:necessary_sufficient_condition} holds.
	
	Let $\mu_i$, $i \in \mathbb{I}[1,\,n^2]$ are the eigenvalues of $\Phi$. Clearly, $g_{\alpha,l}(\mu_i)$ are the eigenvalues of $R_{\alpha,l}$, 
	where
	\begin{equation}\label{mat_poly}
		g_{\alpha,l}(z)
		=
		(\alpha z)^l
		+
		(1-\alpha)z
		\sum_{s=0}^{l-1}(\alpha z)^s.
	\end{equation}
	Hence, the condition \eqref{eq:necessary_sufficient_condition} is equivalent to the eigenvalue condition \eqref{eq:eigenvalue_condition}. This completes the proof.
\end{proof}

\begin{remark}
It follows from \textbf{\Cref{pu_convge_condi}} that the restriction $\alpha\in(0,1]$ imposed in \textbf{\Cref{th_mo_bo}} is not necessary. Moreover, the zero initial condition can be relaxed to an arbitrary initial condition.
\end{remark}

Although \textbf{\Cref{pu_convge_condi}} gives a necessary and sufficient condition for the convergence of Algorithm \eqref{IO_alg}, this condition is expressed in terms of the spectral radius of the iteration matrix. To obtain a more easily verifiable convergence criterion, the case where $\Phi$ is nonnegative is further considered. In this case, the convergence of Algorithm \eqref{IO_alg} can be determined by checking whether the spectral radius of $\Phi$ is less than one.

\begin{theorem}\label{conv_th_phi}
Suppose that matrix equation \eqref{sto_lya_eq} has a unique solution. Assume that $\Phi\geq 0$, $0<\alpha\leq1$, and $l\geq 1$. Then, for any initial condition, the sequence generated by the IO iteration \eqref{IO_alg} converges to the solution of matrix equation \eqref{sto_lya_eq} if and only if
\[
\rho(\Phi)<1.
\]

\end{theorem}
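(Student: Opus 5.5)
By \Cref{pu_convge_condi}, it suffices to show that $\rho(R_{\alpha,l})<1$ if and only if $\rho(\Phi)<1$, under the assumptions $\Phi\ge 0$ and $0<\alpha\le 1$. I would work with the scalar polynomial $g_{\alpha,l}$ from \eqref{mat_poly}. The eigenvalues of $R_{\alpha,l}$ are $g_{\alpha,l}(\mu_i)$. Since $\alpha>0$ and $1-\alpha\ge 0$, $g_{\alpha,l}$ has nonnegative coefficients, so $R_{\alpha,l}\ge 0$ entrywise. Two elementary facts about $g:=g_{\alpha,l}$ will do all the work.
\begin{enumerate}
\item For complex $z$, the triangle inequality gives $|g(z)|\le g(|z|)$.
\item On $[0,\infty)$, $g$ is nondecreasing, and $g(1)=\alpha^l+(1-\alpha)\sum_{s=0}^{l-1}\alpha^s=\alpha^l+1-\alpha^l=1$.
\end{enumerate}
Moreover, $g$ is strictly increasing on $[0,\infty)$: if $\alpha<1$ the linear term $(1-\alpha)z$ has a positive coefficient, and if $\alpha=1$ then $g(z)=z^l$.

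\emph{Sufficiency.} Assume $\rho(\Phi)<1$. For every eigenvalue $\mu_i$ we have
\[
|g(\mu_i)|\le g(|\mu_i|)\le g(\rho(\Phi))<g(1)=1 .
\]
Hence $\rho(R_{\alpha,l})<1$, and \Cref{pu_convge_condi} gives convergence for any initial condition.

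\emph{Necessity.} Assume the iteration converges, so $\rho(R_{\alpha,l})<1$. Since $\Phi\ge 0$, \Cref{Perro_Fro} shows that $\rho(\Phi)$ is itself an eigenvalue of $\Phi$. Then $g(\rho(\Phi))$ is an eigenvalue of $R_{\alpha,l}$, so $g(\rho(\Phi))\le\rho(R_{\alpha,l})<1$. Suppose instead $\rho(\Phi)\ge 1$. Monotonicity of $g$ on $[0,\infty)$ then gives $g(\rho(\Phi))\ge g(1)=1$, a contradiction. Therefore $\rho(\Phi)<1$.

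\emph{Main obstacle.} The one delicate point is the identity $g(1)=1$ together with strict monotonicity on $[0,\infty)$. I would verify both directly by telescoping $(1-\alpha)\sum_{s=0}^{l-1}\alpha^s=1-\alpha^l$. Everything else is a routine combination of \Cref{pu_convge_condi} with the Perron--Frobenius fact in \Cref{Perro_Fro}. The unique-solvability assumption is used only to invoke \Cref{pu_convge_condi}.
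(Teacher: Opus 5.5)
Your proposal is correct and follows the paper's proof. For necessity, the paper likewise uses \Cref{Perro_Fro} to obtain the real eigenvalue $g_{\alpha,l}(\rho(\Phi))$ of $R_{\alpha,l}$, and notes that it is at least $g_{\alpha,l}(1)=1$ when $\rho(\Phi)\ge 1$ because the coefficients are nonnegative. Your sufficiency argument, via $|g_{\alpha,l}(z)|\le g_{\alpha,l}(|z|)$ and strict monotonicity on $[0,\infty)$, correctly fills in the direction the paper only calls straightforward.
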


	\begin{proof}
	Since the sufficiency is straightforward, only the necessity needs to be proved.
	
	 Suppose that the sequence generated by the IO iteration \eqref{IO_alg} converges and that $\Phi\geq 0$. By \textbf{\Cref{Perro_Fro}}, $r=\rho(\Phi)$ is a nonnegative real eigenvalue of $\Phi$. This shows that $g_{\alpha,l}(r)$ is an eigenvalue of $R_{\alpha,l}$, where $g_{\alpha,l}$ is defined in \eqref{mat_poly}. If $r\geq 1$, then
	\[
	g_{\alpha,l}(r)
	=
	(1-\alpha)r
	+
	\alpha(1-\alpha)r^2
	+
	\cdots
	+
	\alpha^{l-1}r^l
	\geq
	1.
	\]
	This contradicts $\rho(R_{\alpha,l})<1$. Hence, $r=\rho(\Phi)<1$. This completes the proof.
\end{proof}

In the general case, \textbf{\Cref{pu_convge_condi}} determines the convergence of Algorithm \eqref{IO_alg} through an implicit condition involving the parameter $\alpha$. For the case $l=2$, an explicit admissible interval for the parameter $\alpha$ can be derived to guarantee the convergence of Algorithm \eqref{IO_alg}.

\begin{theorem}\label{thm:l2_real_eigenvalue}\label{alpha_interval_real}
	Assume that matrix equation \eqref{sto_lya_eq} has a unique solution. Denote the spectrum of $\Phi$ by
	\[
	\sigma(\Phi)=\{\mu_i\in\mathbb{R}: \mu_i\neq 1,\ i\in \mathbb{I}[1,\,n^2]\}.
	\]
	Then, for any initial condition, the sequence generated by the IO iteration \eqref{IO_alg} with $l=2$ converges to the solution of matrix equation \eqref{sto_lya_eq} if and only if
	
	\[
	\alpha\in \bigcap_{\mu_i\neq 0}(\alpha_i^-,\alpha_i^+),
	\]
	where
	\[
	\alpha_i^-=
	\begin{cases}
		-\dfrac{1}{\mu_i}, & 0<\mu_i<1,\\[2mm]
		-\dfrac{1+\mu_i}{\mu_i(\mu_i-1)}, & \mu_i<0 \quad or \quad \mu_i >1,
	\end{cases}
	\]
	and
	\[
	\alpha_i^+=
	\begin{cases}
		\dfrac{1+\mu_i}{\mu_i(1-\mu_i)}, & 0<\mu_i<1,\\[2mm]
		-\dfrac{1}{\mu_i}, & \mu_i<0 \quad or \quad \mu_i >1.
	\end{cases}
	\]
For the zero eigenvalue $\mu_i=0$, no restriction is imposed on $\alpha$.

\end{theorem}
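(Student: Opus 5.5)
The plan is to specialize the eigenvalue criterion of \textbf{\Cref{pu_convge_condi}} to $l=2$ and solve the resulting scalar inequalities explicitly. By \textbf{\Cref{pu_convge_condi}}, for any initial condition the IO iteration converges if and only if $|g_{\alpha,2}(\mu_i)|<1$ for every $\mu_i\in\sigma(\Phi)$, where $g_{\alpha,l}$ is defined in \eqref{mat_poly}. So the whole proof reduces to describing, for each real eigenvalue $\mu_i$, the set of $\alpha$ satisfying this inequality, and then intersecting over $i$.

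The first step is to simplify $g_{\alpha,2}$. From \eqref{mat_poly},
\[
g_{\alpha,2}(z)=\alpha^2z^2+(1-\alpha)z(1+\alpha z)=\alpha z^2+(1-\alpha)z=z+\alpha z(z-1).
\]
Since all $\mu_i$ are real, $g_{\alpha,2}(\mu_i)$ is real, and $|g_{\alpha,2}(\mu_i)|<1$ is equivalent to the pair of linear inequalities in $\alpha$
\[
-1-\mu_i<\alpha\,\mu_i(\mu_i-1)<1-\mu_i .
\]
If $\mu_i=0$, then $g_{\alpha,2}(0)=0$, so the condition holds for every $\alpha$. This is why no restriction is imposed for the zero eigenvalue. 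The hypothesis $\mu_i\neq1$ excludes the remaining degenerate value, where $\mu_i(\mu_i-1)=0$ and $g_{\alpha,2}(1)=1$ regardless of $\alpha$.

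For $\mu_i\neq0,1$, I divide the two inequalities by $\mu_i(\mu_i-1)$, splitting into cases according to its sign.
\begin{itemize}
\item If $0<\mu_i<1$, then $\mu_i(\mu_i-1)<0$ and both inequalities reverse. This gives
\[
\alpha>\frac{1-\mu_i}{\mu_i(\mu_i-1)}=-\frac{1}{\mu_i}
\quad\text{and}\quad
\alpha<\frac{1+\mu_i}{\mu_i(1-\mu_i)} .
\]
These are exactly $\alpha_i^-$ and $\alpha_i^+$ in this case.
\item If $\mu_i<0$ or $\mu_i>1$, then $\mu_i(\mu_i-1)>0$ and the inequalities keep their direction. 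This gives
\[
-\frac{1+\mu_i}{\mu_i(\mu_i-1)}<\alpha<-\frac{1}{\mu_i},
\]
again matching the stated endpoints.
\end{itemize}
Each step is an equivalence, so the set of admissible $\alpha$ for $\mu_i$ is precisely the open interval $(\alpha_i^-,\alpha_i^+)$. Requiring the condition simultaneously for all eigenvalues gives the intersection over $\mu_i\neq0$, which proves both directions at once.

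There is no serious obstacle here; the computation is elementary. The points needing care are the collapse of $g_{\alpha,2}$ to $z+\alpha z(z-1)$ and keeping the inequality directions straight when dividing by $\mu_i(\mu_i-1)$. I will also remark that the intersection may be empty, in which case no $\alpha$ yields convergence. This is consistent with the "if and only if" statement, so no separate argument is needed.
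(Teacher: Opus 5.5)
Your proof is correct and takes the same approach as the paper. Both specialize the eigenvalue criterion of Theorem~\ref{pu_convge_condi} to $l=2$, reduce it to $|\mu_i+\alpha\mu_i(\mu_i-1)|<1$, and split on the sign of $\mu_i(\mu_i-1)$, with $\mu_i=0$ imposing no constraint; you also spell out the sign-case algebra that the paper dismisses as ``it follows directly.''
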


\begin{proof}
	For $l=2$, the eigenvalues of the iteration matrix of Algorithm \eqref{IO_alg} are given by
	$
	g_{\alpha,2}(\mu_i)
	=
	(1-\alpha)\mu_i+\alpha\mu_i^2,
    $
	where $g_{\alpha,2}$ is defined in \eqref{mat_poly}. Hence, by \textbf{\Cref{pu_convge_condi}}, the sequence generated by Algorithm \eqref{IO_alg} converges to the solution of matrix equation \eqref{sto_lya_eq} for any initial condition if and only if the parameter $\alpha$ is selected such that
	\[
	\left|\mu_i+\alpha\mu_i(\mu_i-1)\right|<1, \quad i \in \mathbb{I}[1,\,n^2].
	\]
	
    First, consider the case $0<\mu_i<1$. It follows directly that the admissible interval is
	\[
	\alpha\in
	\left(
	-\frac{1}{\mu_i},
	\frac{1+\mu_i}{\mu_i(1-\mu_i)}
	\right).
	\]
    Next, consider the case $\mu_i<0$ or $\mu_i>1$. It follows directly that the admissible interval is
	\[
	\alpha\in
	\left(
	\frac{1+\mu_i}{\mu_i(1-\mu_i)},
	-\frac{1}{\mu_i}
	\right).
	\]
	Combining the two cases, the convergence condition associated with each nonzero real eigenvalue $\mu_i$ is
	\[
	\alpha\in(\alpha_i^-,\alpha_i^+).
	\]
	For $\mu_i=0$, it holds that $|g_{\alpha,2}(0)|=0<1$. This shows that the zero eigenvalue imposes no restriction on $\alpha$. This completes the proof.
\end{proof}

\textbf{\Cref{alpha_interval_real}} discusses the case where all eigenvalues of $\Phi$ are real. When $\Phi$ has complex eigenvalues, an admissible interval for the parameter $\alpha$ can also be obtained, as stated in the following theorem.

\begin{theorem}\label{thm:l2_alpha_necessary_sufficient}
	Assume that matrix equation \eqref{sto_lya_eq} has a unique solution. Denote the spectrum of $\Phi$ by
	\[
	\sigma(\Phi)=\{\mu_i=a_i+\mathrm{i}b_i,\ i \in \mathbb{I}[1,\,n^2]\}.
	\]
	Suppose further that each nonzero eigenvalue $\mu_i$ satisfies $|\mu_i|<1$. Then, for any initial condition, the sequence generated by the IO iteration \eqref{IO_alg} with $l=2$ converges to the solution of matrix equation \eqref{sto_lya_eq} if and only if the parameter $\alpha$ is selected such that
	\[
	\alpha\in \bigcap_{\mu_i\neq 0}(\tilde{\alpha}_i^-,\tilde{\alpha}_i^+),
	\]
	where
	\[
	\tilde{\alpha}_i^\pm=
	\frac{-\mathcal B_i\pm\sqrt{\Delta_i}}{2\mathcal A_i},
	\]
	
	\[
	\mathcal A_i =(a_i^2+b_i^2)\big((1-a_i)^2+b_i^2\big),
	\]
	\[
	\mathcal B_i=2(a_i^2+b_i^2)(a_i-1),
	\]
	\[
	\mathcal C_i=a_i^2+b_i^2-1,
	\]
	and
	\[
	\Delta_i=\mathcal B_i^2-4\mathcal A_i \mathcal C_i.
	\]
	For the zero eigenvalue $\mu_i=0$, no restriction on $\alpha$ is imposed.
\end{theorem}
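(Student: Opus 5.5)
By \Cref{pu_convge_condi} with $l=2$, the iteration converges for every initial condition if and only if $|g_{\alpha,2}(\mu_i)|<1$ for all $i\in\mathbb{I}[1,\,n^2]$. Here
\[
g_{\alpha,2}(\mu)=(1-\alpha)\mu+\alpha\mu^2=\mu\big(1+\alpha(\mu-1)\big).
\]
Since $g_{\alpha,2}(0)=0$, the zero eigenvalues impose no restriction, exactly as in the proof of \Cref{alpha_interval_real}. The plan is therefore to show, for each fixed nonzero $\mu_i=a_i+\mathrm{i}b_i$ with $|\mu_i|<1$, that
\[
\{\alpha\in\mathbb{R}:\ |g_{\alpha,2}(\mu_i)|<1\}=(\tilde\alpha_i^-,\tilde\alpha_i^+).
\]
Intersecting these sets over the nonzero eigenvalues then gives the claim.

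\textbf{Key step: a quadratic in $\alpha$.} Since $\alpha$ is real, I will expand $|g_{\alpha,2}(\mu_i)|^2$ directly. Write
\[
1+\alpha(\mu_i-1)=\big(1+\alpha(a_i-1)\big)+\mathrm{i}\,\alpha b_i .
\]
Then
\[
|g_{\alpha,2}(\mu_i)|^2=(a_i^2+b_i^2)\Big[\big(1+\alpha(a_i-1)\big)^2+\alpha^2b_i^2\Big].
\]
Expanding in $\alpha$ gives
\[
|g_{\alpha,2}(\mu_i)|^2-1=\mathcal A_i\alpha^2+\mathcal B_i\alpha+\mathcal C_i,
\]
where $\mathcal A_i=(a_i^2+b_i^2)\big((1-a_i)^2+b_i^2\big)$, $\mathcal B_i=2(a_i^2+b_i^2)(a_i-1)$ and $\mathcal C_i=a_i^2+b_i^2-1$. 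These coincide with the coefficients in the statement. The convergence condition for $\mu_i$ is therefore equivalent to the strict quadratic inequality
\[
\mathcal A_i\alpha^2+\mathcal B_i\alpha+\mathcal C_i<0 .
\]

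\textbf{Solving the inequality.} For $\mu_i\neq0$ we have $a_i^2+b_i^2>0$. Because the matrix equation has a unique solution, $I-\Phi$ is nonsingular, so $\mu_i\neq1$ and hence $(1-a_i)^2+b_i^2>0$. Thus $\mathcal A_i>0$. The hypothesis $|\mu_i|<1$ gives $\mathcal C_i<0$, and therefore
\[
\Delta_i=\mathcal B_i^2-4\mathcal A_i\mathcal C_i>0 .
\]
So the parabola opens upward and has two distinct real roots $\tilde\alpha_i^\pm$, and the inequality holds exactly on the open interval $(\tilde\alpha_i^-,\tilde\alpha_i^+)$. Since $\mathcal C_i<0$, the product of the roots is negative, so this interval contains $\alpha=0$. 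This is consistent with the Smith iteration converging when $\rho(\Phi)<1$, and it makes the intersection over all nonzero eigenvalues nonempty.

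\textbf{Main obstacle.} The argument is mostly routine algebra. The points requiring care are checking that the expansion reproduces exactly the stated $\mathcal A_i,\mathcal B_i,\mathcal C_i$, and justifying $\mathcal A_i>0$ and $\Delta_i>0$. The latter is where the hypotheses $\mu_i\neq0$, uniqueness of the solution (which gives $\mu_i\neq1$), and $|\mu_i|<1$ are all used. Without $|\mu_i|<1$, $\Delta_i$ could be nonpositive and the set of admissible $\alpha$ could be empty, which is why the assumption appears in the statement.
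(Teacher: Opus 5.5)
Your proposal is correct and follows essentially the same route as the paper: reduce via Theorem~\ref{pu_convge_condi} to $|g_{\alpha,2}(\mu_i)|<1$, expand the squared modulus into $\mathcal A_i\alpha^2+\mathcal B_i\alpha+\mathcal C_i<0$, solve that quadratic inequality, and note that zero eigenvalues impose no restriction. The paper only says it "analyzes the quadratic inequality"; you additionally verify $\mathcal A_i>0$, $\mathcal C_i<0$ and $\Delta_i>0$. In that check, $\mu_i\neq 1$ already follows from $|\mu_i|<1$, so you do not need the uniqueness assumption there.
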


\begin{proof}
For $l=2$, the eigenvalues of the iteration matrix of the algorithm \eqref{IO_alg} are given by
$
g_{\alpha,2}(\mu_i)
=
(1-\alpha)\mu_i+\alpha\mu_i^2,
$
where $g_{\alpha,2}$ is defined in \eqref{mat_poly}. Therefore, by \textbf{\Cref{pu_convge_condi}}, the convergence condition associated with $\mu_i$ becomes
	\[
	(a_i^2+b_i^2)
	\left(
	[1-\alpha(1-a_i)]^2+\alpha^2b_i^2
	\right)<1, \quad i \in \mathbb{I}[1,\,n^2],
	\]
	Equivalently,
	\[
	\mathcal A_i\alpha^2+ \mathcal B_i\alpha+ \mathcal C_i<0.
	\]
	The result follows by analyzing the above quadratic inequality.
	
	For $\mu_i=0$, it holds that $|g_{\alpha,2}(0)|=0<1$. Therefore, the zero eigenvalue imposes no restriction on $\alpha$. This completes the proof.
\end{proof}

\begin{remark}
    When $l>2$, the eigenvalues of the iteration matrix are given by
    $
    (\alpha\mu_i)^l
    +
    (1-\alpha)\mu_i
    \sum_{s=0}^{l-1}(\alpha\mu_i)^s .
    $
    Consequently, determining the convergence condition reduces to solving a higher-degree inequality with respect to $\alpha$. In future work, methods for solving higher-degree inequalities, such as those based on Viète's formulas, will be employed to determine the admissible interval for the parameter $\alpha$.
    
\end{remark}

\section{Selection of the optimal parameters}\label{opt}

Discussed in this section are the choices of the optimal parameter $\alpha$ and the inner iteration number $l$.

For a fixed $l$, by \textbf{\Cref{pu_convge_condi}}, the admissible set of the tuning parameter $\alpha$ is defined as
\[
\Omega_l
=
\{\alpha\in\mathbb R:\rho(R_{\alpha,l})<1\}.
\label{eq:admissible_set_general}
\]

 In general, a smaller spectral radius leads to a faster convergence rate. Therefore, for a fixed number of inner iterations $l$, the optimal tuning parameter can be defined as:
\[
	\alpha_{\mathrm{opt}} =
\arg\min_{\alpha\in\Omega_l}
\max_{\mu_i\in\sigma(\Phi)}
\left|
(\alpha\mu_i)^l
+
(1-\alpha)\mu_i
\sum_{s=0}^{l-1}(\alpha\mu_i)^s
\right|.
\]

For a fixed admissible parameter $\alpha$, the number of inner iterations $l$ also affects the convergence speed. A larger $l$ may reduce $\rho(R_{\alpha,l})$, but it also increases the computational cost of each outer iteration. Conversely, if $l$ is too small, the inner iteration \eqref{sto_inner_eq} may not provide a sufficiently accurate approximation to the solution $X_{k+1}$ of the outer iteration equation \eqref{eq:outer_matrix}. Therefore, the optimal choice of $l$ should balance the reduction in the spectral radius and the additional cost of inner iterations. Accordingly, a reasonable criterion is defined as follows:
\[
l_{\mathrm{opt}}
=
\arg\max_{l\in\mathbb{N}}
\frac{-\ln \rho(R_{\alpha,l})}{l}.
\]

In future work, explicit expressions for the optimal parameter $\alpha_{\mathrm{opt}}$ and the optimal number of inner iterations $l_{\mathrm{opt}}$ will be investigated in the general case. This paper focuses mainly on the selection of the optimal parameter $\alpha_{\mathrm{opt}}$ when the number of inner iterations is fixed at $l=2$.

\begin{proposition}\label{pro_opt}
Assume that matrix equation \eqref{sto_lya_eq} has a unique solution. For Algorithm \eqref{IO_alg} with $l=2$, let
\[
\sigma(\Phi)=\{\mu_i:\ i\in \mathbb{I}[1,\,n^2]\}.
\]
If $\mu_i\in\mathbb{R}$ and $\mu_i\neq 1$ for all $i\in \mathbb{I}[1,\,n^2]$, then the optimal parameter $\alpha_{\mathrm{opt}}$ can be found from the following finite candidate set,

\[
		\label{Delta1}
		\begin{aligned}
			\Delta_1
			=&
			\left\{
			-\frac{\widetilde{\mathcal B}_i}{2\widetilde{\mathcal A}_i}
			:
			-\frac{\widetilde{\mathcal B}_i}{2\widetilde{\mathcal A}_i}
			\in I
			\right\}      \\
			&\cup
			\left\{
			\alpha:
			\begin{aligned}
				&
				(\widetilde{\mathcal A}_i-\widetilde{\mathcal A}_j)\alpha^2
				+
				(\widetilde{\mathcal B}_i-\widetilde{\mathcal B}_j)\alpha  \\
				&\quad
				+
				(\widetilde{\mathcal D}_i-\widetilde{\mathcal D}_j)=0,
				\quad
				\alpha\in I,
				\quad i<j
			\end{aligned}
			\right\},
		\end{aligned}
\]
	where
    $
	\widetilde{\mathcal A}_i=\mu_i^2(1-\mu_i)^2,
	\widetilde{\mathcal B}_i=2\mu_i^2(\mu_i-1),
	\widetilde{\mathcal D}_i=\mu_i^2,
    $
    and
    $I = \bigcap_{\mu_i\neq 0}
    (\alpha_i^-,\alpha_i^+)$.
	
	If \(\mu_i=a_i+\mathrm{i}b_i\) and \(|\mu_i|<1\) for all nonzero eigenvalues,
	then the optimal parameter \(\alpha_{\mathrm{opt}}\) can be found in the following
	finite candidate set,
\[
\label{Delta2}
\begin{aligned}
	\Delta_2
	=&
	\left\{
	-\frac{\mathcal B_i}{2\mathcal A_i}
	:
	-\frac{\mathcal B_i}{2\mathcal A_i}
	\in J
	\right\}      \\
	&\cup
	\left\{
	\alpha:
	\begin{aligned}
		&
		(\mathcal A_i-\mathcal A_j)\alpha^2
		+
		(\mathcal B_i-\mathcal B_j)\alpha  \\
		&\quad
		+
		(\mathcal D_i-\mathcal D_j)=0,
		\quad
		\alpha\in J,
		\quad i<j
	\end{aligned}
	\right\},
\end{aligned}
\]

	where $\mathcal A_i, \mathcal B_i$ are defined in \textbf{\Cref{thm:l2_alpha_necessary_sufficient}},
	$
	\mathcal D_i=a_i^2+b_i^2,
	$
	and $J = \bigcap_{\mu_i\neq 0}
	(\tilde{\alpha}_i^-,\tilde{\alpha}_i^+)$.
\end{proposition}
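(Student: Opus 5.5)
The plan is to recast the optimization as a min--max of finitely many quadratics in $\alpha$ and then apply the standard characterization of minimizers of a pointwise maximum of convex functions of one variable.

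For $l=2$, \textbf{\Cref{pu_convge_condi}} shows that the eigenvalues of $R_{\alpha,2}$ are $g_{\alpha,2}(\mu_i)=\mu_i+\alpha\mu_i(\mu_i-1)$. In the real case, set
\[
f_i(\alpha)=|g_{\alpha,2}(\mu_i)|^2=\widetilde{\mathcal A}_i\alpha^2+\widetilde{\mathcal B}_i\alpha+\widetilde{\mathcal D}_i .
\]
This identity is checked by direct expansion: $(\mu_i+\alpha\mu_i(\mu_i-1))^2=\mu_i^2+2\alpha\mu_i^2(\mu_i-1)+\alpha^2\mu_i^2(\mu_i-1)^2$. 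In the complex case, the computation in the proof of \textbf{\Cref{thm:l2_alpha_necessary_sufficient}} gives
\[
f_i(\alpha)=\mathcal A_i\alpha^2+\mathcal B_i\alpha+\mathcal D_i ,
\]
since that computation produced the quadratic $\mathcal A_i\alpha^2+\mathcal B_i\alpha+\mathcal C_i$ with $\mathcal C_i=\mathcal D_i-1$. Squaring is monotone on $[0,\infty)$, so $\alpha_{\mathrm{opt}}$ also minimizes
\[
F(\alpha)=\max_i f_i(\alpha).
\]
By \textbf{\Cref{alpha_interval_real}} and \textbf{\Cref{thm:l2_alpha_necessary_sufficient}}, the admissible set $\Omega_2$ equals $I$ (respectively $J$). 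Zero eigenvalues give $f_i\equiv 0$ and can be discarded, so every remaining $f_i$ has $\widetilde{\mathcal A}_i>0$ (or $\mathcal A_i>0$, which holds because $\mu_i\ne 0$ and $\mu_i\neq 1$). Each $f_i$ is therefore a strictly convex quadratic, and $F$ is convex and continuous.

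Next I show that the minimum is attained inside the open interval. $F<1$ on $I$, and $F=1$ at every finite endpoint of $I$ by continuity, since some constraint is active there. If $I$ is unbounded, then $F\to\infty$ because the leading coefficients are positive. Hence $F$ attains its minimum at some interior point $\alpha^\ast\in I$; I will also check that $I$ is nonempty, which holds because it contains the $\alpha$ used for convergence.

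The key step is to classify $\alpha^\ast$. Consider the active set $\mathcal I(\alpha^\ast)=\{i: f_i(\alpha^\ast)=F(\alpha^\ast)\}$. If it contains two indices $i<j$ with $f_i\not\equiv f_j$, then $\alpha^\ast$ is a root of $f_i-f_j$, which is exactly the displayed quadratic equation, so $\alpha^\ast\in\Delta_1$. If instead all active functions coincide with a single $f_i$, then near $\alpha^\ast$ we have $F=f_i$, because the inactive functions are strictly smaller by continuity. An interior local minimizer of a differentiable function has zero derivative, so $\alpha^\ast=-\widetilde{\mathcal B}_i/(2\widetilde{\mathcal A}_i)$, which lies in $I$ and hence in $\Delta_1$. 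Identical quadratics from repeated eigenvalues are handled by treating them as one. Finiteness follows because each nonzero difference $f_i-f_j$ has at most two roots; if $\widetilde{\mathcal A}_i=\widetilde{\mathcal A}_j$, the equation is linear or trivial, and the trivial case is the identical-function case already excluded.

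The same argument carries over verbatim to the complex case with $J$, giving $\Delta_2$. Evaluating $F$ on the finite candidate set then yields $\alpha_{\mathrm{opt}}$.

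I expect the main obstacle to be bookkeeping rather than depth. I must make sure the minimizer lies in the open interval and not at the boundary, and I must correctly handle degenerate pairs $f_i\equiv f_j$, for example from conjugate eigenvalue pairs, which give identical $f_i$, so that the intersection equation is not vacuous.
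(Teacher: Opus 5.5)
Your proposal is correct and follows essentially the same route as the paper. Both write $\rho(R_{\alpha,2})^2$ as the pointwise maximum of the quadratics $f_i(\alpha)$ and conclude that the minimizer over $I$ (respectively $J$) is either the vertex of some $f_i$ or a crossing point of two of them. Your active-set argument, the check that the minimum is attained in the open interval, and your rule of merging identical quadratics make rigorous what the paper only asserts ``by analyzing the graphs.'' The merging rule matters for repeated eigenvalues and for conjugate pairs. In those cases the paper's intersection equation reduces to $0=0$, so $\Delta_2$ as literally written would not be finite.
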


\begin{proof}
	It suffices to prove the case of real eigenvalues, since the proof for the case of complex eigenvalues is similar.
For $l=2$, the eigenvalues of the iteration matrix of Algorithm \eqref{IO_alg} are given by
	\[
	g_{\alpha,2}(\mu_i)
	=
	\mu_i+\alpha\mu_i(\mu_i-1),
	\]
	where $g_{\alpha,2}$ is defined in \eqref{mat_poly}. Taking the square of the modulus gives
	\[
	|g_{\alpha,2}(\mu_i)|^2
	=
	\widetilde{\mathcal A}_i\alpha^2+\widetilde{\mathcal B}_i\alpha+\widetilde{\mathcal D}_i. 
	\]
	Define the auxiliary functions
	$
	f_i(\alpha)=|g_{\alpha,2}(\mu_i)|^2.
	$
	Then, the optimal parameter $\alpha_{\mathrm{opt}}$ can be obtained by solving the following minimax problem,
	\[
	\min_{\alpha\in \bigcap_{\mu_i\neq 0}(\alpha_i^-,\alpha_i^+)}
	\max_{i\in \mathbb{I}[1,n^2]} f_i(\alpha).
	\]
	
	By analyzing the graphs of the quadratic functions $f_i(\alpha)$, the optimal parameter can occur only in the following two cases. In the first case, $\alpha_{\mathrm{opt}}$ is located on the axis of symmetry of some quadratic function $f_i(\alpha)$. In the second case, $\alpha_{\mathrm{opt}}$ is located at an intersection point of two quadratic functions $f_i(\alpha)$ and $f_j(\alpha)$, where $i<j$. Therefore, in the case of real eigenvalues, the optimal parameter can be searched for within the finite candidate set $\Delta_1$. This completes the proof.
\end{proof}

By \textbf{Proposition \ref{pro_opt}}, it is sufficient to search for the optimal parameter within the candidate set $\Delta_1$ or $\Delta_2$. Therefore, a heap sort algorithm can be used to find the optimal parameter from the candidate set. The specific procedure is given as follows.

\begin{algorithm}[!t]
	\caption{Heap Sort}
	\label{alg:heap_sort}
	\small
	\setlength{\baselineskip}{11pt}
	\begin{algorithmic}[1]
		\State Compute the eigenvalues $\mu_i$, $i\in I[1,n^2]$, of the matrix $\Phi$.
		\If{$\mu_i\in\mathbb{R}$ for all $i\in I[1,n^2]$}
		\State Compute all candidates in $\Delta_1$ and store them in $\Gamma$.
		\ElsIf{$|\mu_i|<1$ for all nonzero eigenvalues $\mu_i$}
		\State Compute all candidates in $\Delta_2$ and store them in $\Gamma$.
		\Else
		\State The candidate set $\Delta_2$ is not applicable.
		\State Stop.
		\EndIf
		\State Compute the corresponding spectral radius by using the numbers in array $\Gamma$ , and store them sequen
		tially in array $P(s)$.
		\State Store the ordered pair $(\Gamma(s),P(s))$ in $S$.
		\State Construct a min-heap from $S$ according to $P(s)$.
		\State Let $N=\operatorname{length}(\Gamma)$.
		\For{$s=\lfloor N/2\rfloor:-1:1$}
		\State Call $\operatorname{HEAPIFY}(S,s,N)$.
		\EndFor
		\State Let $(\Gamma(1),P(1))$ be the root of the min-heap.
		\State Set $\alpha_{\mathrm{opt}}=\Gamma(1)$ and $\rho_{\min}=P(1)$.
		\State Output $\alpha_{\mathrm{opt}}$ and $\rho_{\min}$.
		
		\Procedure{HEAPIFY}{$S,s,N$}
		\State Set $smallest=s$.
		\State Set $left=2s$ and $right=2s+1$.
		\If{$left\leq N$ and $P(left)<P(smallest)$}
		\State $smallest=left$.
		\EndIf
		\If{$right\leq N$ and $P(right)<P(smallest)$}
		\State $smallest=right$.
		\EndIf
		\If{$smallest\neq s$}
		\State Exchange $S(s)$ and $S(smallest)$.
		\State Call $\operatorname{HEAPIFY}(S,smallest,N)$.
		\EndIf
		\EndProcedure
	\end{algorithmic}
\end{algorithm}

\section{Numerical examples}\label{example}
In this section, several numerical examples are presented to show the
effectiveness of the proposed algorithm \eqref{IO_alg}. The residual is defined by
\[
\Delta(k) = \left\| A_0^{\mathrm{T}} X(k) A_0 + \sum_{i=1}^{m} \delta_i A_i^{\mathrm{T}} X(k) A_i + Q - X(k) \right\|_{\mathrm{F}}.
\]

\begin{example}\label{eq1}
	In this example, SLM equation \eqref{sto_lya_eq} with a single random disturbance term is considered, where $m=1$ and $\delta_1=1$. The coefficient matrices are given as follows.
	\[
	A_0 =
	\begin{bmatrix}
		0.1679 &  0.0827 &  0.1016 & -0.1377 & -0.0529 \\
		0.0827 &  0.2397 & -0.0389 &  0.0344 & -0.0974 \\
		0.1016 & -0.0389 &  0.1305 &  0.0585 &  0.1092 \\
		-0.1377 &  0.0344 &  0.0585 &  0.2053 &  0.1558 \\
		-0.0529 & -0.0974 &  0.1092 &  0.1558 &  0.0961
	\end{bmatrix},
	\]
	\[
	A_1 =
	\begin{bmatrix}
		0.4525 &  0.0309 & -0.0605 &  0.2557 &  0.0443 \\
		0.0309 &  0.4097 &  0.2026 & -0.0115 &  0.1808 \\
		-0.0605 &  0.2026 &  0.4839 &  0.0218 & -0.0741 \\
		0.2557 & -0.0115 &  0.0218 &  0.4212 & -0.1589 \\
		0.0443 &  0.1808 & -0.0741 & -0.1589 &  0.4953
	\end{bmatrix}.
	\]
	and $Q$ is chosen as the identity matrix. 
	
In this example, direct computation shows that all eigenvalues of $\Phi$ are real. Therefore, the eigenvalues of $\Phi$ satisfy the condition in \textbf{\Cref{alpha_interval_real}}. When $l=2$, Algorithm \eqref{IO_alg} converges to the unique solution of SLM equation \eqref{sto_lya_eq} for any initial condition if and only if
$
\alpha\in(-1.7790,\,5.8549).
$
Then, the heap sort algorithm is applied over this interval, yielding the optimal parameter $\alpha_{\mathrm{opt}}=1.8754$. 
\textbf{Fig.~\ref{fig:numerical_results} (a)} shows that the spectral radius $\rho(R_{\alpha,2})$ attains its minimum at $\alpha_{\mathrm{opt}}=1.8754$, which demonstrates the effectiveness of the heap sort algorithm for optimal parameter selection.
Next, the convergence performance of Algorithm \eqref{IO_alg} with different values of $\alpha$ is compared. 
The convergence curves with the zero initial condition, $l=2$, and $\alpha \in (-1.7790,\,5.8549)$ are shown in \textbf{Fig.~\ref{fig:numerical_results} (b)}. 
\textbf{Fig.~\ref{fig:numerical_results} (b)} shows that Algorithm \eqref{IO_alg} achieves the fastest convergence rate when $\alpha_{\mathrm{opt}}=1.8754$.

\begin{figure*}[!t]
	\centering
	
	\begin{minipage}[t]{0.32\textwidth}
		\centering
		\includegraphics[width=\textwidth]{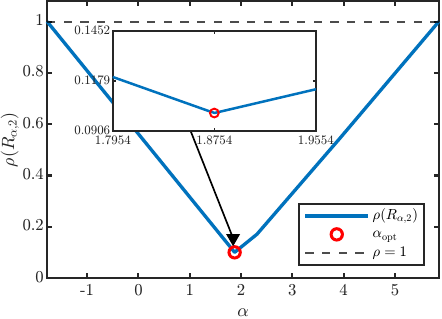}
		
		\vspace{1mm}
		\small (a)
	\end{minipage}
	\hfill
	\begin{minipage}[t]{0.32\textwidth}
		\centering
		\includegraphics[width=\textwidth]{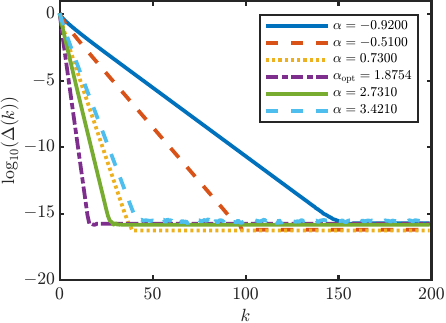}
		
		\vspace{1mm}
		\small (b)
	\end{minipage}
	\hfill
	\begin{minipage}[t]{0.32\textwidth}
		\centering
		\includegraphics[width=\textwidth]{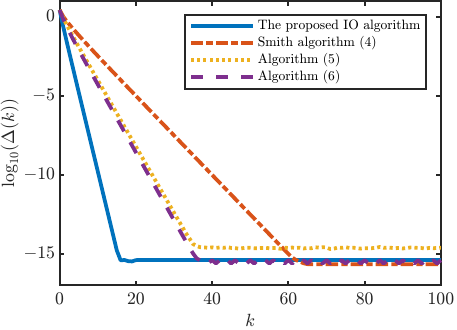}
		
		\vspace{1mm}
		\small (c)
	\end{minipage}
	
	\caption{
		Convergence behavior in Example~\ref{eq1}: 
		(a) Spectral radius of the iteration matrix $R_{\alpha,2}$ with respect to $\alpha$;
		(b) Convergence curves of Algorithm~\eqref{IO_alg} with different parameters $\alpha$;
		(c) Comparison of different algorithms.
	}
	\label{fig:numerical_results}
\end{figure*}

	Moreover, the convergence performance of Algorithm \eqref{IO_alg} with $\alpha_{\mathrm{opt}}=1.8754$ and $l=2$ is compared with that of the existing algorithms \eqref{simith}, \eqref{zhan}, and \eqref{chen}. \textbf{Fig.~\ref{fig:numerical_results} (c)} presents the convergence curves of these algorithms. As shown in \textbf{Fig.~\ref{fig:numerical_results} (c)}, Algorithm \eqref{IO_alg} requires fewer iterations than the existing algorithms.

	Furthermore, the convergence performance of Algorithm \eqref{IO_alg} is further compared with that of the existing algorithms in terms of the iteration steps (IT), average iteration time (Time), and final iteration error $\Delta(k)$ under the stopping criterion $\Delta(k)<10^{-12}$. The results are listed in \textbf{Table \ref{tab:algorithm_comparison}}. \textbf{Table \ref{tab:algorithm_comparison}} shows that Algorithm \eqref{chen} requires less average iteration time than Algorithm \eqref{IO_alg}. However, it requires more iterations and yields a larger final iteration error. In particular, its final iteration error is much larger than that of Algorithm \eqref{IO_alg}. Moreover, Algorithm \eqref{IO_alg} outperforms Algorithms \eqref{simith} and \eqref{zhan} in terms of all three metrics. 
	
	\begin{table}[!htbp]
		\centering
		\captionsetup{justification=centering,singlelinecheck=false}
		\caption{Comparison of different iterative algorithms with $\Delta(k)<10^{-12}$ in Example \ref{eq1}.}
		\label{tab:algorithm_comparison}
		\footnotesize
		\setlength{\tabcolsep}{4pt}
		\renewcommand{\arraystretch}{1.1}
		\begin{tabular}{l c c c}
			\toprule
			Algorithm & IT  & Time, s  & Error  \\
			\midrule
			The proposed algorithm \eqref{IO_alg} & 13 & 4.7295e-05 & 1.6532e-13 \\
			Smith algorithm \eqref{simith}        & 48 & 7.2587e-05 & 9.8147e-13 \\
			Implicit algorithm \eqref{zhan}       & 30 & 0.0024     & 4.0242e-13 \\
			Explicit algorithm \eqref{chen}       & 28 & 4.5542e-05 & 9.6399e-13 \\
			\bottomrule
		\end{tabular}
	\end{table}
	
   Finally, to investigate the influence of $l$ on Algorithm \eqref{IO_alg}, \textbf{Table~\ref{tab:different_l}} reports the convergence performance for different values of $l$ under the zero initial condition and a fixed $\alpha$, with the stopping criterion $\Delta(k)<10^{-12}$. \textbf{Table~\ref{tab:different_l}} shows that, as $l$ increases, the number of iteration steps decreases, whereas the computational cost increases. Therefore, a suitable choice of $l$ is necessary to achieve an appropriate balance between convergence efficiency and computational cost.

\begin{table}[!htbp]
	\centering
	\caption{Convergence performance of Algorithm~\eqref{IO_alg} for different $l$ with fixed $\alpha=0.8$ in Example \ref{eq1}.}
	\label{tab:different_l}
	\setlength{\tabcolsep}{6pt}
	\renewcommand{\arraystretch}{1.1}
	\begin{tabular*}{0.80\columnwidth}{@{\extracolsep{\fill}}ccc@{}}
		\toprule
		$l$ & IT & Time, s \\
		\midrule 
		3 & 22 & 8.1130e-05 \\
		4 & 20 & 8.3446e-05 \\
		5 & 19 & 8.9038e-05 \\
		6 & 18 & 9.6812e-05 \\
		7 & 18 & 1.0594e-04 \\
		\bottomrule
	\end{tabular*}
\end{table}

\end{example}

\begin{example}\label{eq2}
	
In this example, numerical experiments with increasing matrix dimensions are carried out to compare the convergence performance of Algorithm \eqref{IO_alg} and Algorithm \eqref{chen}. For each dimension $n$, the matrices $A_0$ and $A_1$ are generated by pole assignment. Specifically, the eigenvalues are prescribed in advance, and the corresponding matrices are constructed to ensure $\rho(\Phi)<1$.

\textbf{Table~\ref{tab:dimension_comparison}} reports the iteration steps (IT), average iteration time (Time), and final iteration error $\Delta(k)$ with $\Delta(k)<10^{-12}$, where the optimal $\alpha$ of Algorithm \eqref{IO_alg} is obtained by heap sort algorithm over the admissible interval from \textbf{\Cref{alpha_interval_real}}, showing fewer iterations and smaller final errors than Algorithm \eqref{chen}, with comparable Time.

\begin{table*}[!t]
	\centering
	
	\caption{Convergence comparison for different dimensions with $\Delta(k)<10^{-12}$ in Example~\ref{eq2}.}
	\label{tab:dimension_comparison}
	\small
	\renewcommand{\arraystretch}{1.05}
	\begin{tabular*}{\textwidth}{@{}c@{\extracolsep{\fill}}cccccccc@{}}
		\toprule
		\multirow{2}{*}{$n$}
		& \multicolumn{4}{c}{IO algorithm (12)}
		& \multicolumn{4}{c}{Algorithm (6)} \\
		\cmidrule(lr){2-5} \cmidrule(lr){6-9}
		& $\alpha_{\mathrm{opt}}$
		& IT
		& Time, s
		& Error
		& $\gamma_{\mathrm{opt}}$
		& IT
		& Time, s
		& Error \\
		\midrule
		50   & 1.5249 & 10 & 4.930e-04 & 1.328e-13 & 1.3599 & 19 & 4.6927e-04 & 2.0932e-13 \\
		100  & 1.5319 & 10 & 2.0217e-03 & 2.1918e-13 & 1.3601 & 19 & 2.0093e-03 & 3.268e-13 \\
		300  & 1.5528 & 10 & 1.4516e-02 & 7.1187e-13 & 1.3658 & 19 & 1.5075e-02 & 8.970e-13 \\
		500  & 1.5427 & 10 & 0.1378 & 6.8962e-13 & 1.3625 & 19 & 0.1329 & 9.248e-13 \\
		700  & 1.5356 & 11 & 0.3275 & 5.7163e-14 & 1.3674 & 20 & 0.3093 & 3.0813e-13 \\
		1000 & 1.5500 & 11 & 0.6417 & 5.7626e-14 & 1.3660 & 20 & 0.5863 & 3.2838e-13 \\
		\bottomrule
	\end{tabular*}
\end{table*}
\end{example}

\section{Conclusion}\label{conclu}
This paper proposes an inner-outer iterative algorithm for solving stochastic Lyapunov matrix equations. The proposed method introduces the number of inner iteration steps as an additional adjustable parameter, thereby improving the flexibility of parameter tuning. Convergence properties are established under both zero and arbitrary initial conditions. For the case $l=2$, an explicit admissible interval for the parameter $\alpha$ is derived, and heap sort algorithm are provided to determine the optimal parameter. Numerical examples verify the theoretical results and show that the proposed algorithm achieves better convergence performance than several existing iterative methods. Future work will focus on deriving explicit expressions for the optimal parameters in the general case.

\bibliographystyle{cas-model2-names}

\bibliography{cas-refs}

@article{digalakis2002ml,
	title={ML estimation of a stochastic linear system with the EM algorithm and its application to speech recognition},
	author={Digalakis, Vassilios and Rohlicek, Jan Robin and Ostendorf, Mari},
	journal={IEEE Transactions on speech and audio processing},
	volume={1},
	number={4},
	pages={431--442},
	year={2002},
	publisher={IEEE}
}

@article{gallant2025soft,
	title={Soft-Constrained Stochastic MPC of Markov Jump Linear Systems: Application to Real-Time Control With Deadline Overruns},
	author={Gallant, Melanie and Mark, Christoph and Pazzaglia, Paolo and von Keler, Johannes and Beermann, Laura and Schmidt, Kevin and Maggio, Martina},
	journal={IEEE Control Systems Letters},
	year={2025},
	publisher={IEEE}
}

@article{mclane1969asymptotic,
	title={Asymptotic stability of linear autonomous systems with state-dependent noise},
	author={McLane, P},
	journal={IEEE Transactions on Automatic Control},
	volume={14},
	number={6},
	pages={754--755},
	year={1969},
	publisher={IEEE}
}

@article{jodar1987explicit,
	title={Explicit solutions for a system of coupled Lyapunov differential matrix equations},
	author={Jodar, L and Mariton, M},
	journal={Proceedings of the Edinburgh Mathematical Society},
	volume={30},
	number={3},
	pages={427--434},
	year={1987},
	publisher={Cambridge University Press}
}

@article{ding2005gradient,
	title={Gradient based iterative algorithms for solving a class of matrix equations},
	author={Ding, Feng and Chen, Tongwen},
	journal={IEEE Transactions on Automatic Control},
	volume={50},
	number={8},
	pages={1216--1221},
	year={2005},
	publisher={IEEE}
}

@article{zhou2010gradient,
	title={Gradient-based maximal convergence rate iterative method for solving linear matrix equations},
	author={Zhou, Bin and Lam, James and Duan, Guang-Ren},
	journal={International journal of computer mathematics},
	volume={87},
	number={3},
	pages={515--527},
	year={2010},
	publisher={Taylor \& Francis}
}

@article{zhang2017implicit,
	title={Implicit iterative algorithms with a tuning parameter for discrete stochastic Lyapunov matrix equations},
	author={Zhang, Ying and Wu, Ai-Guo and Shao, Chun-Tao},
	journal={IET Control Theory \& Applications},
	volume={11},
	number={10},
	pages={1554--1560},
	year={2017},
	publisher={Wiley Online Library}
}

@article{wu2018iterative,
	title={An iterative algorithm for discrete periodic Lyapunov matrix equations},
	author={Wu, Ai-Guo and Zhang, Wen-Xue and Zhang, Ying},
	journal={Automatica},
	volume={87},
	pages={395--403},
	year={2018},
	publisher={Elsevier}
}

@article{tian2018new,
	title={New explicit iteration algorithms for solving coupled continuous Markovian jump Lyapunov matrix equations},
	author={Tian, Zhaolu and Fan, CM and Deng, Youjun and Wen, PH},
	journal={Journal of the Franklin Institute},
	volume={355},
	number={17},
	pages={8346--8372},
	year={2018},
	publisher={Elsevier}
}

@article{tian2020multi,
	title={A multi-step Smith-inner-outer iteration algorithm for solving coupled continuous Markovian jump Lyapunov matrix equations},
	author={Tian, Zhaolu and Wang, Junxin and Dong, Yinghui and Liu, Zhongyun},
	journal={Journal of the Franklin Institute},
	volume={357},
	number={6},
	pages={3656--3679},
	year={2020},
	publisher={Elsevier}
}

@article{he2025convergence,
	title={Convergence properties of Jacobi gradient-based iteration algorithm for the complex conjugate and transpose Sylvester matrix equations},
	author={He, Jiating and Chen, Xuesong},
	journal={Numerical Algorithms},
	year={2025},
	publisher={Springer},
	note={\url{https://doi.org/10.1007/s11075-025-02277-5}}
}

@article{chen2026explicit,
	title={Explicit iterative algorithm with optimal tuning parameter for stochastic Lyapunov matrix equation},
	author={Chen, Zebin and Sun, Hui-Jie and Zhang, Jinxiu and Chen, Xuesong},
	journal={Numerical Algorithms},
	volume={101},
	number={1},
	pages={679--702},
	year={2026},
	publisher={Springer}
}

@article{kubrusly1985mean,
	title={Mean square stability conditions for discrete stochastic bilinear systems},
	author={Kubrusly, Carlos and Costa, O},
	journal={IEEE Transactions on Automatic Control},
	volume={30},
	number={11},
	pages={1082--1087},
	year={1985},
	publisher={IEEE}
}

@article{bibby1974axiomatisations,
	title={Axiomatisations of the average and a further generalisation of monotonic sequences},
	author={Bibby, John},
	journal={Glasgow Mathematical Journal},
	volume={15},
	number={1},
	pages={63--65},
	year={1974},
	publisher={Cambridge University Press}
}

@book{horn2012matrix,
	title={Matrix analysis},
	author={Horn, Roger A and Johnson, Charles R},
	year={2012},
	publisher={Cambridge university press}
}

\end{document}